\numberwithin{equation}{section}
\theoremstyle{defintion}
\theoremstyle{assumption}
\theoremstyle{remark}
\theoremstyle{remark}
\theoremstyle{example}
\newtheorem{theorem}{Theorem}[section]
\newtheorem{lemma}[theorem]{Lemma}
\newtheorem{remark}[theorem]{Remark}
\newtheorem{definition}[theorem]{Definition}
\newtheorem{main-idea}[theorem]{Main Ideas}
\newtheorem{main-result}[theorem]{Main results}
\newtheorem{cor}[theorem]{Corollary}
\DeclareMathOperator{\Var}{Var}
\title[Distributional Convergence of the Empirical Laplacians]{Distributional Convergence of the Empirical Laplacians with integral Kernels on domains with boundaries}
\author{Bernard Akwei, Luke~G. Rogers, Alexander Teplyaev }
\date{\today}
\begin{document}

\begin{abstract}
 Motivated by the problem of understanding theoretical bounds for the performance of the Belkin-Niyogi Laplacian eigencoordinate approach to dimension reduction in machine learning problems, we consider the convergence of random graph Laplacian operators to a Laplacian-type operator on a manifold.  

For $\{X_j\}$ i.i.d.\ random variables taking values in $\mathbb{R}^d$ and $K$ a kernel with suitable integrability we define random graph Laplacians
\begin{equation*}
D_{\epsilon,n}f(p)=\frac{1}{n\epsilon^{d+2}}\sum_{j=1}^nK\left(\frac{p-X_j}{\epsilon}\right)(f(X_j)-f(p))
\end{equation*}
and study their convergence as $\epsilon=\epsilon_n\to0$ and $n\to\infty$ to a second order elliptic operator of the form 
\begin{align*}
\Delta_K f(p) &=  \sum_{i,j=1}^d\frac{\partial f}{\partial x_i}(p)\frac{\partial g}{\partial x_j}(p)\int_{\mathbb{R}^d}K(-t)t_it_jd\lambda(t)\\
&\quad +\frac{g(p)}{2}\sum_{i,j=1}^d\frac{\partial^2f}{\partial x_i\partial x_j}(p)\int_{\mathbb{R}^d}K(-t)t_it_jd\lambda(t).
\end{align*}

Our results provide conditions that guarantee that  $D_{\epsilon_n,n}f(p)-\Delta_Kf(p)$ converges to zero in probability as $n\to\infty$ and can be rescaled by $\sqrt{n\epsilon_n^{d+2}}$ to satisfy a central limit theorem.  They generalize the work of  Gin\'e--Koltchinskii~\cite{gine2006empirical} and Belkin--Niyogi~\cite{belkin2008towards} to allow manifolds with boundary and a wider choice of kernels $K$, and to prove convergence under weaker smoothness assumptions  and a correspondingly more precise choice of conditions on the asymptotics of $\epsilon_n$ as $n\to\infty$. 
\end{abstract}

\maketitle
\tableofcontents

\section{Introduction}

The Laplacian eigenmap method introduced by Belkin--Niyogi~\cite{belkin2003laplacian} for dimension reduction in machine learning problems may be thought of as follows: given sample points $X_1,\dotsc,X_n$ in a high dimensional space, one forms a graph Laplacian by introducing edge weights between pairs $X_i,X_j$.  Computing the eigenfunctions $\{\zeta_k\}_{k=1}^N$ of this Laplacian that have the smallest non-zero eigenvalues provides a map $(\zeta_1,\dotsc,\zeta_N):\{X_j\}\to\mathbb{R}^N$ called the Laplacian eigenmap to $\mathbb{R}^N$.  It was proposed in~\cite{belkin2003laplacian} that this map preserves the large-scale geometry of the sample data while reducing the dimension to a prescribed value $N$ because the graph Laplacians  approximate the Laplace-Beltrami operator of the manifold and the latter encodes suitable geometric information.
If the data lies on a $d$-dimensional manifold then, following \cite{belkin2008towards}, one usually has $N > d$ because the number of eigenfunctions 
needs to be larger than the dimension of the manifold for an accurate representation of this manifold. 
For instance, one needs at least three eigenfunctions to construct a three-dimensional 
representation of a two-dimensional torus, as shown in~\cite[Figure 2]{eigenmaps}.
Our motivation is to explore eigenmaps in more general, less smooth settings, offering a broader perspective compared to previous work.

The preceding sketch of the eigenmap method leaves several significant details open.  
One is how to choose the edge weights. Belkin--Niyogi suggest several possibilities,  
including connecting points $x_i, x_j$ with an edge of weight $e^{-|x_i-x_j|^2/\epsilon}$ (Gaussian weights)  
or choosing a threshold $\epsilon$ and connecting points that satisfy $|x_i-x_j| < \epsilon$.  
The highest generality would allow weights $K(x,y)$, but it is typical to only consider functions  
$K(x-y)$ or, in order to fix a length scale $\epsilon$, $K\bigl(\frac{x-y}{\epsilon}\bigr)$  
for some integrable $K$.

Another natural question is in what sense and under what assumptions the graph Laplacians approximate a Laplacian operator on the manifold.  One approach to this problem is to take points $\{X_j\}_{j=1}^n$ randomly on a manifold and ask whether the graph Laplacians converge to the classical Laplace-Beltrami operator as the length scale $\epsilon\to0$ and the number of points $n\to\infty$.    Significant early results of this type are   due to Gin\'{e}--Koltchinskii~\cite{gine2006empirical}, who write their random graph Laplacian in the form
\begin{equation}\label{eqn:defrandomgraphLapl}
D_{\epsilon,n}f(p)= \frac{1}{n\epsilon^{d+2}}\sum_{j=1}^nK\Bigl(\frac{p-X_j}{\epsilon_n}\Bigr)(f(X_j)-f(p)),
\end{equation}
with $K$ the Gaussian kernel and the points $X_i$ taken to be i.i.d.\ with uniform distribution on a compact manifold of dimension $d$ without boundary. Normalizing the measure to be unity they denote the Laplace-Beltrami operator by $\Delta$ and for a function $f$ that is $C^3$ in a neighborhood of $p$ they prove a law of large numbers (LLN) \cite[Proposition 4.1]{gine2006empirical} and a central limit theorem (CLT) \cite[Proposition 4.1]{gine2006empirical} as follows:
\begin{align}
& \text{\mbox{\qquad If }} \epsilon_n\to0 \text{ and } n\epsilon_n^{d+2}\to\infty \text{ then }   D_{\epsilon_n,n} f(p)-\Delta f(p)   \overset{\text{prob}}\longrightarrow 0, \label{eq-WLLN} \\
& \text{\mbox{\qquad If }} n\epsilon_n^{d+4}\to0 \text{ and } n\epsilon_n^d\to\infty 
 \text{ then } \sqrt{n\epsilon_n^{d+2}}\bigl( D_{\epsilon_n,n}f(p) - \Delta f(p)\bigr)  \overset{\text{dist}}\longrightarrow sZ. \label{eqn:gkcondits}
\end{align}
where $Z$ is a standard normal random variable. 
A formula for the variance $s^2$ similar to~\eqref{eqn:varfactor} below is also given in~\cite[Proposition 4.1]{gine2006empirical}.  This convergence may be taken to be uniform over a family of functions with uniformly bounded $C^3$ norm with the same proof.  Moreover, using a more difficult analysis and with extra factors of the form $|\log \epsilon_n|^{-1}$ in the hypotheses, the above convergences may be taken to be uniform over points $p$ in the manifold and the weak law of large numbers may be upgraded to a strong law (i.e.\ with almost sure convergence).

A contemporaneous LLN result of Belkin--Niyogi~\cite{belkin2008towards} is formulated  for $f\in C^\infty$ and  with $\epsilon_n=n^{-\frac1{d+2+\nu}}$ for some $\nu>0$ and may be stated in the form
\begin{equation}\label{eqn:BNlln}
  D_{\epsilon_n,n} f(p)-\Delta f(p)   \overset{\text{prob}}\longrightarrow 0 \text{ as } n\to\infty.
\end{equation}
It, too, may be taken to be uniform for $f$ in a family of functions with uniformly bounded $C^3$ norm. 
Belkin--Niyogi in \cite[Section 5]{belkin2008towards} also consider the question of taking a non-uniform probability distribution on the $X_i$ and describe the limit in this case.

Other significant results at around the same time as the works~\cite{gine2006empirical,belkin2008towards,singer2006graph} are that of Coifman--Lafon~\cite{coifman2006diffusion}, who point out that various kernel eigenmap techniques are equivalent to  (heat) diffusion maps and study of the effect of non-uniform probability distributions for sample points in this setting, and Jones--Maggioni--Schul~\cite{jones2008manifold}, who prove that a diffusion map yields a bi-Lipschitz parametrization of a H\"older manifold on a (quantitatively) small neighborhood of a point.

During the nearly 20 years since the preceding papers were written, the question of convergence of graph Laplacians to Laplace-Beltrami operators has been explored by several authors in a number of different contexts, particularly with respect to convergence of spectral data. We note, in particular, the works~\cite{burago2015graph,burago2019,garcia2020error} which study operators like~\eqref{eqn:defrandomgraphLapl} and~\eqref{avkernop} in non-probabilistic settings. Other applications of graph Laplacians like~\eqref{eqn:defrandomgraphLapl} have also been discovered, such as its use in~\cite{green2021minimax} for non parametric regression.  The effectiveness of Laplacian eigenmaps for preserving geometry and representing data in applications, along with various explanations for this effectiveness, have been considered  in~\cite{lin2022varadhan,Venkatraman2023,armstrong2025optimal}.

Our main purpose in the present work is to significantly generalize and improve upon the results of Gin\'{e}--Koltchinskii~\cite{gine2006empirical} and Belkin--Niyogi~\cite{belkin2008towards} with regard to the following aspects of the theory:
\begin{itemize}
\item Permitting more general kernels $K$ that satisfy integral estimates, including those that depend on two parameters and discontinuous kernels that arise naturally in clustering and non-parametric methods;
\item Relaxing the smoothness requirements from $f\in C^3$ to $f\in C^{2,\theta}$, i.e.\ with H\"older continuous second derivatives;
\item Allowing non-uniform distribution with density $g$ for the sample points $X_i$ and considering appropriate assumptions on the smoothness of $g$;
\item Considering   non-compact manifolds;
\item Considering  manifolds with boundary and analyzing the effect of the boundary on the limit of the graph Laplacians.
\end{itemize}
These extensions of the theory are  highly relevant for a broad range of theoretical and applied problems because data sets often have non-uniform structures and natural boundaries, including those that would make it necessary to consider functions of lower regularity.

We follow the approach common to~\cite{gine2006empirical,belkin2008towards} of introducing an operator that averages the difference $f(x)-f(p)$ as weighted by the kernel $K$ and comparing it to both $D_{\epsilon_n,n}f(p)$ and a suitable Laplacian. Define
\begin{equation}\label{avkernop}
	D_\epsilon f(p) =\frac{1}{\epsilon^{d+2}}\mathbb{E}K\left(\frac{p-X}{\epsilon}\right)(f(X)-f(p)).
	\end{equation}
	
Similarly to 	\cite[Section 5]{gine2006empirical}, we can identify the manifold with its image under the local coordinate chart because derivatives of functions are evaluated locally and our global estimates are expressed in integral terms.  Thus, consider kernels $K(p,t)$ that can depend on two variables $p,t\in\mathbb R^2$, 
although in our notation we write $K(t)$ because only dependence on $t$ is important for our results.  
	
In Section~\ref{approximation} we consider the scaled difference $\sqrt{n\epsilon_n^{d+2}}\bigl(D_{\epsilon,n}f(p)- D_{\epsilon}f(p)\bigr)$ between the averaging integral Laplacian \eqref{avkernop} and the random empirical graph Laplacian \eqref{eqn:defrandomgraphLapl} at~$p$. Our first result,  Theorem~\ref{thm2.4},  is a central limit theorem when $\epsilon_n\to0$ and $n\epsilon_n^d\to\infty$, which in particular implies that the difference $D_{\epsilon,n}f(p)- D_{\epsilon}f(p)$ converges to zero in probability exactly when $n\epsilon_n^{d+2}\to\infty$ as $n\to\infty$.
An analogous result is in the proof of Proposition~4.1 of~\cite{gine2006empirical}, but under stronger smoothness assumptions on $f$ and only for the case that $K$ is Gaussian.

 We next examine the difference $D_\epsilon f(p)-\Delta_Kf(p)$, where $\Delta_K$ is a Laplacian associated to the kernel $K$ (see Definition~\ref{delta}), showing this converges to zero if $f\in C^2$ (in Theorem~\ref{thm0.4}) and does so at rate $O(\epsilon^\theta)$ if $f\in C^{2,\theta}$ (in Theorem~\ref{thm0.5}).  These results require the probability density $g$ for the sample points to be $C^1$ or $C^{1,\theta}$ respectively.   Combining these results gives a central limit theorem when $n\epsilon_n^{d+2+2\theta}\to0$ and $n\epsilon_n^d\to\infty$ under the assumption that  $f\in C^{2,\theta}$, $g\in C^{1,\theta}$, see Corollary~\ref{combine}.  This generalizes the results of Gin\'{e}--Koltchinskii~\cite{gine2006empirical} to a lower order of smoothness and sharpens their conditions~\eqref{eqn:gkcondits} on the rate at which $\epsilon_n\to0$.

At the same time, the preceding result readily yields a weak law of large numbers.  
This is similar to the condition for the Belkin--Niyogi law of large 
numbers~\eqref{eqn:BNlln} from~\cite{belkin2008towards}. 
For the CLT,
if  $n\epsilon_n^{d+2}\to\infty$ and  $n\epsilon_n^{d+2+2\theta}\to0$,
our Corollary~\ref{combine} has an 
advantage that the exponent $\theta$ is explicitly tied to the 
smoothness of $f$ and $g$.

For both the CLT and the LLN our class of kernels and allowable distributions of sample points are more general than those considered in either of~\cite{gine2006empirical,belkin2008towards}. This does force us to consider more general Laplacians (see Definition~\ref{delta}) and to require an extra cancellation condition in the hypotheses of Theorems~\ref{thm0.4} and \ref{thm0.5}.  However, it also means that we can see understand the role symmetries of $K$ and/or the density $g$ of the distribution of the sample points could play in the convergence of $D_{\epsilon_n,n}f(p)$ to $\Delta_Kf(p)$ and the structure of $\Delta_K$.  In particular we note that in those theorems:
\begin{itemize}
	\item The cancellation assumption $\nabla f(p)\cdot\int K(-t)td\lambda=0$ is valid for even kernels, meaning those for which $K(-t)=K(t)$.
	\item  If the kernel satisfies $\int K(-t)t_it_j=0$ for $i\neq j$, then the non-diagonal terms in $\Delta_K f(p)$ vanish. This happens, in particular, if $K$ has product structure $K(t)=K_1(t_1)\dots K_d(t_d)$ or if $K$ is rotationally symmetric.
	\item If the non-diagonal terms of $\Delta_Kf(p)$ vanish and, in addition, $\frac{\partial g}{\partial x_j}=0$ then $\Delta_K f$ is the classical Euclidean Laplacian. One case in which this occurs is when $g$ is even.
\end{itemize}

We do not discuss the uniformity of our estimates with respect to $ f $ in a family of functions with bounded norm in the appropriate smoothness space, although the techniques of \cite{gine2006empirical} are applicable in our setting with appropriate additional assumptions.

Section~\ref{sec:corr} discusses another topic that we believe is not addressed in the literature.  We establish that under the conditions we use to prove a central limit theorem for 
\begin{equation*}
	\sqrt{n\epsilon_n^{d+2}}(D_{\epsilon_n,n}f(p)-\Delta_Kf(p))
\end{equation*}
 one also has that the resulting normal random variables have asymptotically vanishing correlations when they are evaluated at two distinct points $p_1$ and $p_2$.  This quantifies the local property of the residual terms in the central limit theorem.

In Section~\ref{sec:bdy} we turn to the situation where the manifold of interest has a boundary or, equivalently, our sampling is done on a subset of the space.  A simple motivating example is to consider one-dimensional situation 
\begin{equation*}
L_\epsilon f(p)=\frac{1}{\epsilon}\mathbb{E}K(\frac{p-X}{\epsilon})(f(X)-f(p))=\frac{1}{\epsilon}\int_{B(p,\epsilon)}(f(y)-f(p))d\lambda(y) 
\end{equation*}
with 
$B(p,\epsilon)=(p-\epsilon,p+\epsilon)\subset \mathbb R$. Here 
we choose  $K=\mathbbm{1}_{(-1,1)}$ and $X$ is uniformly distributed on $(-1,1)$.  If $f \in C^2(-1,1)$ it is not difficult to show that $$\epsilon^{-2}L_\epsilon f(p)  \xrightarrow[\epsilon\to0]{}  \frac16 f''(p)$$ at interior points and that this remains valid for $f$ having vanishing first derivative at the endpoints, see \cite{eigenmaps}. 

To handle the more general boundary structure seen in higher dimensions we introduce, in Definition~\ref{delta_A(p)}, a new Laplacian operator $\Delta_{K,\mathbb{A}(p)}$ that explicitly ``sees'' the boundary $\mathbb{A}(p)$ of the set at the point $p$.  A cancellation condition for convergence of $D_{\epsilon_n,n}f(p)$ to $\Delta_{K,\mathbb{A}(p)}f(p)$ that is valid for quite general kernels is considered in Theorem~\ref{main_thm_for_domains} and simplified in subsequent corollaries. For general kernels we need a very strong vanishing curvature condition on the boundary of our sampling set in order to perform this simplification, see Corollary~\ref{Cor 2.2}, but if $K$ has an even symmetry around an axis that is in some sense normal  to the boundary at $p$ then no such condition is required and we can even permit a singularity at the point $p$, see Corollaries~\ref{cor:Ksymmetries} and~\ref{cor:cornerpts}.  Finally, in Theorem~\ref{thm:genbdy} we show that for kernels that lack this symmetry it is possible that the limit of $D_{\epsilon_n,n}$ contains novel boundary terms involving the tangential component of the gradient at the boundary. 

Thus, 
our work introduces a new boundary-sensitive Laplacian operator and examines its implications for graph Laplacians, demonstrating how boundary effects influence convergence results.
If we define 
\begin{equation}\label{eq-cases}
\Delta_{K,S,g}(p)=\begin{cases}
\Delta_{K }(p)& \text{\ if\ }p\in   \overset{\circ}{S} \\  \Delta_{K,\mathbb{A}(p)}(p)+ \nabla_T f(p)\cdot v_T(p)& \text{\ if\ }p\in \partial S  
\end{cases}
\end{equation}
where the density $g$ is supported in a closed set $S$, with the interior $\overset{\circ}{S}$,
then for any $p\in S$ we have \begin{equation}\label{e-key}
  D_\epsilon f(p)-\Delta_{K,S,g}f(p) \to 0.
\end{equation}
as $\epsilon\to0$ 
under conditions given in above. 
Here $\nabla_T f(p)$, the tangential component of $\nabla f(p)$ at the boundary point $p\in\partial S$ and $ v_T(p)$ is defined by the integral term, involving the curvatures of $\partial S$ at $p$, in the right hand side of \eqref{eqn:genbdy}.
Moreover, \eqref{e-key} implies the weak LLN  \eqref{eq-WLLN}  and the CLT  \eqref{eqn:gkcondits}  using theorems in Section~\ref{approximation}.    

 Our results highlight the role of kernel symmetries, sample point distributions and smoothness of the boundary in determining the limiting operator.
We   note that several special cases of Corollary~\ref{cor:Ksymmetries} occur naturally in practice. One such case is when  $K$ is even, so that $K(-t) = K(t)$ for all $t \in \mathbb{R}^d$, and is dominated by a radial function with finite second moment. A particular case of the latter is when $K$ is the indicator function of a bounded even domain. 
Another important case is when $K(t) = \psi(\|t\|)$ is spherically symmetric around the origin and has a finite second moment. 
In   such a case, if the manifold has $C^2$ smooth boundary, our results imply that the empirical Laplacian converges to the Laplacian \eqref{eq-cases} with Neumann boundary conditions \eqref{eqn:Apcancellation}.

\subsection*{Acknowledgements}
 
The authors are grateful to 
Bobita Atkins, 
Rachel Bailey, 
Ashka Dalal, 
Natalie Dinin, 
Jonathan Kerby-White, 
Tess McGuinness,  
Tonya Patricks, 
Genevieve Romanelli, 
Yiheng Su, 
for interesting and stimulating discussions during preparation of a 
joint paper~\cite{eigenmaps} that is closely connected to this article. 
The last author thanks 
 Raghavendra Venkatraman for 
 helpful  discussions related to the works 
\cite{Venkatraman2023,armstrong2025optimal}. 
We gratefully acknowledge partial support from the NSF through grant NSF DMS~2349433.

\section{Approximation of the Laplacian}\label{approximation}
The operator $D_\epsilon$ has been called the averaging kernel operator. These operators are interesting in general. Koltchinskii and Gine  have studied the closely  related integral operator $$f(p) \mapsto \frac{1}{\epsilon^{d+2}}\mathbb{E}K\left(\frac{p-X}{\epsilon}\right)f(X),$$ obtaining interesting results about the spectrum, under the assumption that the kernel is symmetric and square integrable over the product space and thus Hilbert-Schmidt \cite{koltchinskii2000random}. In this section our main goals are  Theorems~\ref{thm2.4} which is an analogue of the CLT for the deviations of the averaging kernel operator from the empirical graph Laplacian for certain class of functions; Theorem~\ref{thm0.5} which estimates the rate of convergence of a particular Laplacian operator (see Definition~\ref{delta}); Corollary~\ref{combine}, which combines these to show convergence of $D_{\epsilon_n,n}$ to the Laplacian.

\begin{theorem}
\label{thm2.4}
    Let $\{X,X_j\}$ be i.i.d random variables taking values in $\mathbb{R}^d$ with bounded density $g$ that is continuous at $p$ and  $f\in C^2(\mathbb{R}^d)$. 
    Suppose $K:\mathbb{R}^d\to\mathbb{R}$ is a  kernel satisfying
    \begin{equation}\label{eq-Kt4}
 \int_{\mathbb R^d} (\norm{t}^4+\norm{t}) (|K(t)|^4+\abs{K(t)})dt<\infty.  
\end{equation}  
    If  $\epsilon_n>0$ is a sequence such that $\epsilon_n \to 0$ and  
     $n\epsilon_n^d \to \infty$,  
     then
\begin{equation}\label{eq-key}
\sqrt{n\epsilon_n^{d+2}}(D_{\epsilon_n,n}f(p)-D_{\epsilon_n}f(p)) \overset{\text{dist}}\to s\mathcal{Z},
\end{equation} 
as $n$ goes to infinity, where  $\mathcal{Z}$ has the standard normal distribution $N(0,1)$ and 
\begin{equation}\label{eqn:varfactor}
s^2 = g(p) \sum_{i,j=1}^d  \frac{\partial}{\partial x_i}f(p)\frac{\partial}{\partial x_j}f(p)\int_{\mathbb{R}^d}K^2(-t)t_it_j d\lambda(t).
\end{equation}
\end{theorem}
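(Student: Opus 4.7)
The key observation is that $D_{\epsilon_n,n}f(p)-D_{\epsilon_n}f(p)$ is, up to the prefactor $(n\epsilon_n^{d+2})^{-1}$, a sum of $n$ i.i.d.\ centered random variables, so my plan is to apply the Lindeberg--Feller CLT for triangular arrays via Lyapunov's sufficient condition. Setting
\begin{equation*}
\xi_{n,j}=K\Bigl(\frac{p-X_j}{\epsilon_n}\Bigr)\bigl(f(X_j)-f(p)\bigr),
\end{equation*}
one has
\begin{equation*}
\sqrt{n\epsilon_n^{d+2}}\bigl(D_{\epsilon_n,n}f(p)-D_{\epsilon_n}f(p)\bigr)=\frac{1}{\sqrt{n\epsilon_n^{d+2}}}\sum_{j=1}^n\bigl(\xi_{n,j}-\mathbb{E}\xi_{n,1}\bigr),
\end{equation*}
so it suffices to show $\epsilon_n^{-(d+2)}\Var(\xi_{n,1})\to s^2$ and to verify a Lyapunov fourth-moment condition.

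For the variance, the substitution $x=p-\epsilon_n t$ (with $dx=\epsilon_n^d\,dt$) gives
\begin{equation*}
\mathbb{E}[\xi_{n,1}^2]=\epsilon_n^d\int_{\mathbb{R}^d} K^2(t)\bigl(f(p-\epsilon_n t)-f(p)\bigr)^2 g(p-\epsilon_n t)\,dt.
\end{equation*}
I would then use the $C^2$ Taylor expansion $f(p-\epsilon_n t)-f(p)=-\epsilon_n\nabla f(p)\cdot t+O(\epsilon_n^2\|t\|^2)$ to factor out $\epsilon_n^2$, and invoke continuity of $g$ at $p$ together with dominated convergence (whose dominator is controlled by~\eqref{eq-Kt4} and the boundedness of $g$) to obtain
\begin{equation*}
\epsilon_n^{-(d+2)}\mathbb{E}[\xi_{n,1}^2]\longrightarrow g(p)\int_{\mathbb{R}^d}K^2(t)\bigl(\nabla f(p)\cdot t\bigr)^2\,dt,
\end{equation*}
which, after expanding the inner product and applying the change of variable $t\mapsto -t$, matches~\eqref{eqn:varfactor}. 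A parallel computation shows $\mathbb{E}[\xi_{n,1}]=O(\epsilon_n^{d+1})$, so the subtracted mean-squared term contributes $O(\epsilon_n^d)\to 0$, confirming $\epsilon_n^{-(d+2)}\Var(\xi_{n,1})\to s^2$.

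For the Lyapunov condition at exponent four, using $\mathbb{E}[(Y-\mathbb{E}Y)^4]\le 16\,\mathbb{E}[Y^4]$ reduces the task to showing
\begin{equation*}
\frac{1}{n\epsilon_n^{2(d+2)}}\,\mathbb{E}[\xi_{n,1}^4]\longrightarrow 0.
\end{equation*}
The same change of variables together with $|f(p-\epsilon_n t)-f(p)|\le C\epsilon_n\|t\|$ (with large-$\|t\|$ contributions absorbed by the weight $K^4\|t\|^4$) yields $\mathbb{E}[\xi_{n,1}^4]=O(\epsilon_n^{d+4})$, so the Lyapunov ratio is $O\bigl(1/(n\epsilon_n^d)\bigr)$, which vanishes \emph{precisely} under the hypothesis $n\epsilon_n^d\to\infty$.

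The main obstacle is controlling the large-$\|t\|$ contribution in the second- and fourth-moment integrals: the linear Taylor bound on $f(p-\epsilon_n t)-f(p)$ is only local, and the hypotheses impose neither compact support on $K$ nor global boundedness on $f$. This is exactly where the somewhat unusual integrability condition~\eqref{eq-Kt4}, which couples the weights $\|t\|$ and $\|t\|^4$ to both $|K|$ and $K^4$, does its essential work, letting one separate the small-$t$ Taylor regime from a tail bound handled by the weight $K$. A secondary technical point is confirming that the variance limit $s^2$ is strictly positive unless $\nabla f(p)=0$ or $g(p)=0$; in degenerate cases the CLT still holds with $s=0$ and a point-mass limit, which is consistent with~\eqref{eq-key}.
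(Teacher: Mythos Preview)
Your approach is correct and very close in spirit to the paper's: both use the change of variables $x=p-\epsilon_n t$, a Taylor expansion of $f$ at $p$, and Lyapunov's CLT with fourth moments, with $n\epsilon_n^d\to\infty$ furnishing exactly the Lyapunov condition.

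The one organizational difference is that the paper first decomposes $Z_n=Z_{n,1}+Z_{n,2}$, where $Z_{n,1}$ carries the linear Taylor term $\nabla f(p)\cdot(X_j-p)$ and $Z_{n,2}$ the remainder $R_{p,1}(X_j-p)$; it applies Lyapunov only to $Z_{n,1}$ (whose variance and fourth moment are computed exactly, without further Taylor approximation) and shows $\mathbb{E}Z_{n,2}^2=O(\epsilon_n^2)$ separately, concluding via Slutsky. You instead apply Lyapunov directly to the full summands $\xi_{n,j}$ and push the Taylor expansion inside the variance and fourth-moment computations. The paper's split buys slightly cleaner bookkeeping (each piece is handled by a single tool), while yours is more direct; neither requires an idea the other lacks. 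Your concern about the global Taylor bound when $f$ is merely $C^2$ is well-placed, and in fact the paper's argument for $Z_{n,2}$ tacitly uses a uniform bound on $\sup_{|\alpha|=2}|\partial^\alpha f|$ as well, so this is a shared technical assumption rather than a gap unique to your route.
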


Our proof uses Lyapunov's CLT, which we recall here for the convenience of the reader; a detailed treatment of this result may be found in \cite{billingsley2017probability}.

We note that condition \eqref{eq-Kt4} can be verified using Lemma~\ref{finite_integral_of_moments}.

\noindent
\begin{theorem}[Lyapunov's CLT]\label{Lyapunov} 
  Suppose for each n, the sequence $W_{n,1},\dots,W_{n,r_n}$ are independent and that,
  \begin{equation*}
  \mathbb{E}W_{n,j}=0,\qquad \sigma_{n,j}^2=\mathbb{E}W_{n,j}^2,\qquad s_n^2=\sum_{j=1}^{r_n}\sigma_{n,j}^2.
  \end{equation*}
  If for some $\delta >0$, Lyapunov's condition,
  \begin{equation*}
  \lim_n\frac{1}{s_n^{2+\delta}}\sum_{j=1}^{r_n}\mathbb{E}\abs{W_{n,j}}^{2+\delta}=0
  \end{equation*}
  is satisfied, Then
  $\frac{1}{s_n}\sum_{j=1}^{r_n}W_{n,j}$ converges in distribution to the standard normal random variable as n goes to infinity.
\end{theorem}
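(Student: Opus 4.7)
The plan is to derive the Lindeberg condition from the Lyapunov hypothesis and then close the argument via the standard characteristic-function method. Normalize by setting $Z_{n,j}=W_{n,j}/s_n$, so that $\mathbb{E}Z_{n,j}=0$ and $\sum_{j=1}^{r_n}\mathbb{E}Z_{n,j}^2=1$. By L\'evy's continuity theorem it suffices to show $\prod_{j=1}^{r_n}\phi_{Z_{n,j}}(t)\to e^{-t^2/2}$ pointwise in $t\in\mathbb{R}$, where $\phi_{Z_{n,j}}$ denotes the characteristic function of $Z_{n,j}$.

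The first step uses the elementary bound $x^2\mathbf{1}_{\{|x|>\epsilon s_n\}}\le (\epsilon s_n)^{-\delta}|x|^{2+\delta}$, which upon substituting $x=W_{n,j}$ and summing gives
\begin{equation*}
\frac{1}{s_n^2}\sum_{j=1}^{r_n}\mathbb{E}\bigl[W_{n,j}^2\mathbf{1}_{\{|W_{n,j}|>\epsilon s_n\}}\bigr]\le \epsilon^{-\delta}\cdot\frac{1}{s_n^{2+\delta}}\sum_{j=1}^{r_n}\mathbb{E}|W_{n,j}|^{2+\delta}\to 0
\end{equation*}
for every $\epsilon>0$. This is the Lindeberg condition, and splitting $\mathbb{E}W_{n,j}^2$ at the level $\epsilon s_n$ also yields the Feller uniform smallness $\max_{j\le r_n}\sigma_{n,j}^2/s_n^2\to0$.

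The core estimate is the Taylor expansion $\phi_{Z_{n,j}}(t)=1-\tfrac{t^2}{2}\mathbb{E}Z_{n,j}^2+\rho_{n,j}(t)$, where the remainder bound $|e^{ix}-1-ix-(ix)^2/2|\le\min(|x|^3/6,x^2)$, applied by splitting the expectation at $|Z_{n,j}|=\epsilon$, yields
\begin{equation*}
\sum_{j=1}^{r_n}|\rho_{n,j}(t)|\le \tfrac{|t|^3\epsilon}{6}+t^2\sum_{j=1}^{r_n}\mathbb{E}\bigl[Z_{n,j}^2\mathbf{1}_{\{|Z_{n,j}|>\epsilon\}}\bigr].
\end{equation*}
The second term vanishes by Lindeberg and the first is arbitrarily small in $\epsilon$. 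Taking logarithms of the product, using Feller to justify the expansion $\log(1+z)=z+O(|z|^2)$ with $z$ uniformly small in $j$, and summing then gives $\sum_{j=1}^{r_n}\log\phi_{Z_{n,j}}(t)\to -\tfrac{t^2}{2}$, which is the desired convergence.

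The main obstacle is the careful two-scale truncation in the Taylor remainder: one must use the cubic bound on the event $\{|Z_{n,j}|\le\epsilon\}$ and the quadratic bound on its complement, and then invoke Feller's condition (itself extracted from Lindeberg) to justify passing from a product of characteristic functions to a sum of logarithms with a uniformly controlled quadratic error.
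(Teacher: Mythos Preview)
Your sketch is correct and follows the standard textbook route (Lyapunov $\Rightarrow$ Lindeberg $\Rightarrow$ CLT via characteristic functions), essentially the argument in Billingsley. Note, however, that the paper does not prove this statement at all: it is stated without proof and attributed to \cite{billingsley2017probability}, being recalled only ``for the convenience of the reader.'' So there is no paper proof to compare against; you have supplied a valid proof where the paper simply cites one.
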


We also need Taylor's theorem, for which we use the following notation.

\begin{theorem}{(Taylor's Theorem in Several Variables).\protect{~\cite[Theorem~2 and Corollary~1]{folland2005higher}}}\label{Taylor's Expansion}
    Suppose  $f:\mathbb{R}^d \to \mathbb{R}$ is of class $C^{k+1}$ on an open convex set $S$. If $a \in S$ and  $a+h \in S$, then
    \begin{equation}\label{Taylor}
    f(a+h)=\sum_{\abs{\alpha}\leq k}\frac{\partial f(a)}{\alpha !}h^\alpha + R_{a,k}(h)
    \end{equation}
    where the remainder is given in Lagrange form by 
    \begin{equation}
    R_{a,k}(h)=\sum_{\abs{\alpha}=k+1}\partial^\alpha f(a+ch)\frac{h^\alpha}{\alpha!}, \text{ for some } c \in (0,1).
    \end{equation}\label{eqn:Taylorremainder}
    If $|\partial^\alpha f(x)|\leq M$ for all $x\in S$ and multi-indices $|\alpha|=k+1$ then, with $\|h\|=\sum_1^d|h_j|$, the remainder satisfies the bound
    \begin{equation*}
    \bigl|R_{a,k}(h)\bigr| \leq \frac{M}{(k+1)!}\|h\|^{k+1}.
    \end{equation*}
\end{theorem}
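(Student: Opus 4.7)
The plan is to reduce the multivariable statement to the classical one-variable Taylor theorem with Lagrange remainder. Since $S$ is open and convex and contains both $a$ and $a+h$, the segment $\{a+th:t\in[0,1]\}$ lies in $S$, and I can choose an open interval $I\supset[0,1]$ so that $a+th\in S$ for all $t\in I$. Define $g(t)=f(a+th)$ on $I$; by repeated use of the chain rule $g\in C^{k+1}(I)$.

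The heart of the proof is the identity
$$g^{(m)}(t)=\sum_{|\alpha|=m}\frac{m!}{\alpha!}\,\partial^\alpha f(a+th)\,h^\alpha\qquad(0\le m\le k+1),$$
which I would establish by induction on $m$. The base case $m=1$ is the ordinary chain rule $g'(t)=\sum_j h_j\partial_jf(a+th)$. For the inductive step I differentiate the displayed expression in $t$, apply the chain rule term-by-term, and reindex using $\partial_j\partial^\alpha=\partial^{\alpha+e_j}$; the resulting sum over pairs $(\alpha,j)$ with $|\alpha|=m$ is regrouped into a sum over multi-indices $\beta$ with $|\beta|=m+1$ using the elementary combinatorial identity $\sum_{j:\beta_j\ge1}\frac{m!}{(\beta-e_j)!}=\frac{(m+1)!}{\beta!}$. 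This is really the multinomial theorem in disguise, capturing $(h_1\partial_1+\cdots+h_d\partial_d)^m$.

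With this identity in hand I apply the one-variable Taylor theorem with Lagrange remainder to $g$ at $t=0$ evaluated at $t=1$,
$$g(1)=\sum_{m=0}^k\frac{g^{(m)}(0)}{m!}+\frac{g^{(k+1)}(c)}{(k+1)!}\quad\text{for some }c\in(0,1),$$
and substitute the identity into both the polynomial part (at $t=0$) and the remainder term (at $t=c$). The polynomial part becomes $\sum_{|\alpha|\le k}\frac{\partial^\alpha f(a)}{\alpha!}h^\alpha$, and the remainder becomes $\sum_{|\alpha|=k+1}\partial^\alpha f(a+ch)\frac{h^\alpha}{\alpha!}$, giving exactly \eqref{Taylor} with the Lagrange form.

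The bound on $R_{a,k}(h)$ is then immediate from $|\partial^\alpha f|\le M$ and a second appeal to the multinomial theorem:
$$|R_{a,k}(h)|\le\sum_{|\alpha|=k+1}\frac{M\,|h^\alpha|}{\alpha!}=\frac{M}{(k+1)!}\sum_{|\alpha|=k+1}\frac{(k+1)!}{\alpha!}|h^\alpha|=\frac{M}{(k+1)!}\Bigl(\sum_{j=1}^d|h_j|\Bigr)^{k+1}=\frac{M}{(k+1)!}\|h\|^{k+1}.$$
The only nontrivial step is the inductive chain-rule identity for $g^{(m)}$; the combinatorial bookkeeping there is the main (minor) obstacle, and everything else is quotation of the single-variable theorem and the multinomial theorem.
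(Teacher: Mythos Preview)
Your argument is correct and is the standard reduction to the one-variable Taylor theorem via $g(t)=f(a+th)$, together with the multinomial identity for $g^{(m)}$. Note that the paper does not give its own proof of this statement: it is quoted as a known result with a citation to Folland, so there is no in-paper proof to compare against.
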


\begin{remark}\label{TaylorRemark}
   In Theorem~\ref{Taylor's Expansion} with $k=2$, equation (\ref{Taylor}) has the form
    \begin{equation*}
        f(a+h)=f(a)+\nabla f(a)\cdot h + \frac{1}{2}h^THh + R_{a,2}(h)
    \end{equation*}
    where H is the Hessian matrix.
\end{remark}

We also record the following, from which the condition~\eqref{eq-Kt4} ensures the first through fourth moments of $|K|^j$ are finite for $j=1,2,3,4$, both when the measure is $d\lambda$ and when it is $gd\lambda$ for the bounded function $g$. This fact is used numerous times in the arguments proving Theorem~\ref{thm2.4}, often without being explicitly mentioned. For example, it justifies our use of standard simplifications of the expectation that are valid only for integrable random variables.

\begin{lemma}\label{NewFiniteMoment}
If $\alpha,\beta,p,r \in [1,\infty)$, $h(t)\geq0$ and 
    \begin{equation*}
    \int\left(h(t)^p+h(t)^r\right)\left(\abs{K(t)}^\alpha+\abs{K(t)}^\beta\right)\,d\lambda(t)<\infty,
    \end{equation*}
then
    \begin{equation*}
    \int\abs{K(t)}^j h(t)^q<\infty, \forall \alpha < j <\beta,  p<q<r.
    \end{equation*}
\end{lemma}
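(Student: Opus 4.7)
The plan is to use a straightforward pointwise domination argument based on the elementary observation that for any nonnegative real number $x$ and exponents $a \leq s \leq b$, one has $x^s \leq x^a + x^b$. This follows by splitting into the cases $x \leq 1$ (where $x^s \leq x^a$) and $x > 1$ (where $x^s \leq x^b$); in either case, $x^s \leq \max(x^a,x^b) \leq x^a + x^b$.

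Applying this elementary bound twice, once to $|K(t)|$ with exponents $\alpha \leq j \leq \beta$ and once to $h(t)$ with exponents $p \leq q \leq r$, I would obtain the pointwise inequality
\begin{equation*}
|K(t)|^j h(t)^q \leq \bigl(|K(t)|^\alpha + |K(t)|^\beta\bigr)\bigl(h(t)^p + h(t)^r\bigr).
\end{equation*}
Expanding the right-hand side gives a sum of four terms, each of the form $|K(t)|^\gamma h(t)^\delta$ with $\gamma \in \{\alpha,\beta\}$ and $\delta \in \{p,r\}$. By the hypothesis, the full sum is precisely the integrand whose integral is assumed finite, so integrating the pointwise inequality yields the desired conclusion.

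There is essentially no obstacle here; the lemma is a bookkeeping statement that lets later arguments write a single integrability hypothesis covering all the moments of $|K|$ (paired with polynomial weights $\|t\|^q$) that subsequently appear in the proofs of Theorem~\ref{thm2.4} and related results. The only mild subtlety is that the statement is written with strict inequalities $\alpha < j < \beta$ and $p < q < r$, but the elementary inequality $x^s \leq x^a + x^b$ holds at the endpoints as well, so the conclusion covers the closed range without additional effort.
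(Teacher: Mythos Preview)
Your proof is correct. The paper's proof reaches the same conclusion by phrasing the argument in terms of $L^p$ interpolation: first it observes that the hypothesis gives $h\in L^p(|K|^\alpha d\lambda)\cap L^r(|K|^\alpha d\lambda)$, interpolates to get $h\in L^q(|K|^\alpha d\lambda)$, does the same with $|K|^\beta$ in place of $|K|^\alpha$, and then interprets these two facts as $K\in L^\alpha(h^q d\lambda)\cap L^\beta(h^q d\lambda)$ to interpolate once more in the $K$ exponent. Your route is more direct: the pointwise inequality $x^s\leq x^a+x^b$ for $a\leq s\leq b$ is exactly the elementary fact underlying $L^p$ interpolation on a finite-measure-free space, and applying it in both variables simultaneously collapses the paper's two-step interpolation into a single pointwise domination. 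Both arguments are equally valid; yours has the virtue of being self-contained and of making explicit that the endpoints $j\in\{\alpha,\beta\}$, $q\in\{p,r\}$ are included.
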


\begin{proof}
The assumptions give $h\in L^p(|K|^\alpha d\lambda)\cap L^r(|K|^\alpha d\lambda)$, so by interpolation also $h\in L^q(|K|^\alpha d\lambda)$.  Similarly $h\in L^q(|K|^\beta d\lambda)$. But then $K\in L^\alpha(|h|^q d\lambda)\cap L^\beta(|h|^q d\lambda)$, so again by interpolation $K\in L^{j}(|h|^qd\lambda)$.\end{proof}

\begin{proof}[Proof of Theorem \ref{thm2.4}]Write
\begin{align}\label{Zn}
&Z_n(p) =\sqrt{n\epsilon^{d+2}}(D_{\epsilon,n}f(p)-D_{\epsilon}f(p))\\\notag
	&=\frac{1}{\sqrt{n\epsilon^{d+2}}}\sum_{j=1}^n\left(K\left(\tfrac{p-X_j}{\epsilon}\right)(f(X_j)-f(p))-\mathbb{E}K\left(\tfrac{p-X_j}{\epsilon}\right)(f(X_j)-f(p))\right)
\end{align}
where the last step used the fact the $X_j$ have the same distribution as $X$. Now, take the first degree Taylor expansion~\eqref{Taylor} of both instances of $f$. It is convenient to write $Y_j=X_j-p, Y=X-p$ and collect the linear and remainder terms so that $Z_n=Z_{n,1}+Z_{n,2}$, with
\begin{align}
    Z_{n,1}(p)&= \frac{1}{\sqrt{n\epsilon^{d+2}}}\sum_{j=1}^n\left(K\left(\frac{-Y_j}{\epsilon}\right)\nabla f(p)\cdot Y_j -\mathbb{E}K\left(\frac{-Y_j}{\epsilon}\right)\nabla f(p)\cdot Y_j\right) \label{eqnZ_n,1}\\
    Z_{n,2}(p)&=\frac{1}{\sqrt{n\epsilon^{d+2}}}\sum_{j=1}^n\left(K\left(\frac{-Y_j}{\epsilon}\right)R_{p,1}(Y_j)-\mathbb{E}K\left(\frac{-Y_j}{\epsilon}\right)R_{p,1}(Y_j)\right) \label{eqnZ_n,2}
\end{align}
in which expressions we have suppressed n and written $\epsilon =\epsilon_n$.
The main term is $Z_{n,1}$ and will be analyzed using the Lyapunov CLT. To do so,
write
\begin{equation}\label{W_n,j}
    W_{n,j}=\frac{1}{\sqrt{n\epsilon^{d+2}}}\left(K\left(\frac{-Y_j}{\epsilon}\right)\nabla f(p)\cdot Y_j -\mathbb{E}K\left(\frac{-Y_j}{\epsilon}\right)\nabla f(p)\cdot Y_j\right)
\end{equation}
and note that for each $n$, the $W_{n,j}$ are i.i.d.\ random variables with respect to $j=1,\dots, n$ and $\Var(W_{n,j})=\frac{1}{n}\Var(Z_{n,1})$. So in notation of Theorem~\ref{Lyapunov} we have, $r_n=n, s_n^2=\Var(Z_{n,1})$. In Lemma~\ref{varZn1 expectationWnj} we find that $\mathbb{E}\abs{W_{n,j}}^4 \leq C_1\norm{\nabla f(p)}^4n^{-2}\epsilon^{-d}$, and that $s_n^2$ converge to $s^2$ given the statement of the theorem. If $s=0$, the convergence of $Z_{n,1}$ to zero in distribution follows immediately. If not, it is apparent that we can apply Lyapounov's CLT with $\delta=2$. Specifically,
\begin{equation*}
\frac{1}{s_n^4}\sum_{j=1}^n\mathbb{E}\abs{W_{n,j}}^4\leq \frac{C_1\norm{\nabla f(p)}^4}{s_n^4n\epsilon^d}
\end{equation*}
and since our hypothesis has $n\epsilon^d \to \infty$ the Lyapunov condition is verified and $\frac{1}{s_n}Z_{n,1}$ converges in distribution to a standard normal random variable.

It remains to show that the terms $Z_{n,2}$  converge to zero. This is done in Lemma~\ref{bound for Z_n,2} where we show $\mathbb{E}Z_{n,2}$ is dominated by a multiple of $\epsilon^2$.
\end{proof}

\begin{lemma}\label{bound for Z_n,2}
    Under the hypothesis of Theorem~\ref{thm2.4} there is a constant $C_2$ , so that the term $Z_{n,2}$ in~\eqref{eqnZ_n,2} satisfies
    \begin{equation*}
    \mathbb{E}Z_{n,2}^2\leq C_2 \sup_{\abs{\alpha}=2}\abs{\partial^\alpha f}^2\epsilon^2
    \end{equation*}
\end{lemma}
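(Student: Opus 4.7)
The plan is straightforward moment estimation using independence, the Taylor remainder bound, and a rescaling of the integral. Write $U_{n,j}=\frac{1}{\sqrt{n\epsilon^{d+2}}}K\bigl(\tfrac{-Y_j}{\epsilon}\bigr)R_{p,1}(Y_j)$ so that $Z_{n,2}=\sum_{j=1}^n\bigl(U_{n,j}-\mathbb{E}U_{n,j}\bigr)$. Since the $X_j$, and hence the $U_{n,j}$, are i.i.d.\ and each summand is centered, independence gives
\begin{equation*}
\mathbb{E}Z_{n,2}^2=\sum_{j=1}^n\Var(U_{n,j})\leq \sum_{j=1}^n\mathbb{E}U_{n,j}^2=n\,\mathbb{E}U_{n,1}^2=\frac{1}{\epsilon^{d+2}}\,\mathbb{E}\!\left[K\!\left(\tfrac{-Y}{\epsilon}\right)^{2}R_{p,1}(Y)^{2}\right].
\end{equation*}

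Next I invoke the Lagrange form of the first-order Taylor remainder from Theorem~\ref{Taylor's Expansion} with $k=1$: setting $M=\sup_{|\alpha|=2}|\partial^\alpha f|$, one has $|R_{p,1}(Y)|\leq\tfrac{M}{2}\|Y\|^{2}$ pointwise. Substituting and then changing variables $y=-\epsilon t$ in the expectation against the density $g$ yields
\begin{equation*}
\mathbb{E}Z_{n,2}^2\leq \frac{M^2}{4\epsilon^{d+2}}\int_{\mathbb{R}^d}K\!\left(\tfrac{-y}{\epsilon}\right)^{2}\|y\|^{4}g(p+y)\,d\lambda(y)=\frac{M^2\epsilon^{2}}{4}\int_{\mathbb{R}^d}K(t)^{2}\|t\|^{4}g(p-\epsilon t)\,d\lambda(t).
\end{equation*}
The factor $\epsilon^{d+4}$ produced by the change of variables cancels $\epsilon^{-(d+2)}$ and leaves the desired $\epsilon^{2}$.

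Finally I bound the remaining integral by a constant independent of $\epsilon$. Using boundedness of $g$ and the integrability hypothesis~\eqref{eq-Kt4}, Lemma~\ref{NewFiniteMoment} (applied with $h(t)=\|t\|$, exponents $p=1$, $r=4$, and $\alpha=1$, $\beta=4$, or directly in interpolation) guarantees that $\int K(t)^{2}\|t\|^{4}d\lambda(t)<\infty$. Setting $C_2=\tfrac{1}{4}\|g\|_\infty\!\int K(t)^{2}\|t\|^{4}d\lambda(t)$ gives the claimed bound.

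The only real subtlety is the verification that the relevant moment of $K$ is finite, which is exactly the purpose of the hypothesis~\eqref{eq-Kt4} and Lemma~\ref{NewFiniteMoment}; everything else is routine once the i.i.d.\ structure is exploited to convert the variance of a sum into a single one-variable integral and once the Taylor remainder is dominated by a multiple of $\|Y\|^{2}$.
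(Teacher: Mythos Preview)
Your proof is correct and follows essentially the same route as the paper's: reduce the variance of the centered i.i.d.\ sum to a single expectation $\frac{1}{\epsilon^{d+2}}\mathbb{E}\bigl(K(\tfrac{-Y}{\epsilon})R_{p,1}(Y)\bigr)^2$, apply the Taylor remainder bound $|R_{p,1}(Y)|\leq C\sup_{|\alpha|=2}|\partial^\alpha f|\,\|Y\|^2$, change variables to produce the factor $\epsilon^{d+4}$, and use $\|g\|_\infty$ together with the moment hypothesis on $K$. The only cosmetic differences are your choice of substitution $y=-\epsilon t$ (yielding $K(t)$ and $g(p-\epsilon t)$ rather than the paper's $K(-t)$ and $g(p+\epsilon t)$) and your explicit appeal to Lemma~\ref{NewFiniteMoment} for the finiteness of $\int K^2\|t\|^4\,d\lambda$, which the paper leaves implicit.
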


\begin{proof}
    By independence of $Y_j=X_j-p$ and the fact that they have the same distribution as $Y=X-p$
\begin{align*}
    \mathbb{E}Z_{n,2}^2
     &=\frac{1}{n\epsilon^{d+2}}\sum_{j=1}^n\mathbb{E}\Big(K(\frac{-Y_j}{\epsilon})R_{p,1}(Y_j)-\mathbb{E}K(\frac{-Y_j}{\epsilon})R_{p,1}(Y_j)\Big)^2\\
     &=\frac1{\epsilon^{d+2}} \mathbb{E}\Big(K(\frac{-Y}{\epsilon})R_{p,1}(Y)-\mathbb{E}K(\frac{-Y}{\epsilon})R_{p,1}(Y)\Big)^2\\
    &\leq \frac{1}{\epsilon^{d+2}}\mathbb{E}\Bigl( K(\frac{-Y}{\epsilon})R_{p,1}(Y)\Bigr)^2
\end{align*}
However, $|R_{p,1}(Y)|\leq C\sup_{|\alpha|=2}|\partial^\alpha f|\|Y\|^2$ and using that g is bounded we obtain
  \begin{align*}
     \mathbb{E}\Bigl( K(\frac{-Y}{\epsilon})\|Y\|^2\Bigr)^2
    &=\int \bigl(K(\frac{-y}{\epsilon})\bigr)^2 \|y\|^4g(y)\,d\lambda(y)\\
      \leq& C_d \|g\|_\infty\epsilon^{d+4}\int \|t\|^4 K^2(-t)\,d\lambda(t). \qedhere
    \end{align*}
\end{proof}

The following lemma can be used to verify condition  \eqref{eq-Kt4}.

\begin{lemma}\label{finite_integral_of_moments}
    Suppose,
    \begin{equation*}
    \abs{K(t)} \leq \frac{C}{\norm{t}^\tau+\norm{t}^\beta}, 
    \end{equation*}
    for some $C\in \mathbb{R}^d$ and for  all $t \in \mathbb{R}^d$. And assume $\alpha,\eta \in \mathbb{N}$ are such that $\alpha\beta - \eta > d$ and we $\alpha\tau-\eta <d$,
    then,
    \begin{equation*}
    \int_{\mathbb{R}^d}\abs{K(-t)}^\alpha \norm{t}^\eta\,d\lambda(t) < \infty
    \end{equation*}
\end{lemma}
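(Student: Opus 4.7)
The plan is to split the integration domain into a unit ball around the origin and its complement, and bound $(\|t\|^\tau+\|t\|^\beta)^{\alpha}$ from below on each piece using whichever of the two summands is dominant there. Since $\|-t\|=\|t\|$, the pointwise hypothesis gives
\begin{equation*}
|K(-t)|^\alpha\|t\|^\eta \leq \frac{C^\alpha \|t\|^\eta}{(\|t\|^\tau+\|t\|^\beta)^\alpha}.
\end{equation*}
By the symmetry of the hypothesis in $\tau$ and $\beta$ we may assume $\tau\leq \beta$ (and note that the two conditions $\alpha\beta-\eta>d$ and $\alpha\tau-\eta<d$ then force $\tau<\beta$).

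Next I would handle the ball $\{\|t\|\leq 1\}$. There $\|t\|^\tau\geq\|t\|^\beta$, so $(\|t\|^\tau+\|t\|^\beta)^\alpha\geq\|t\|^{\alpha\tau}$ and the integrand is dominated by $C^\alpha\|t\|^{\eta-\alpha\tau}$. Passing to spherical coordinates, the resulting radial integral $\int_0^1 r^{\eta-\alpha\tau+d-1}\,dr$ is finite precisely because the hypothesis $\alpha\tau-\eta<d$ gives $\eta-\alpha\tau+d-1>-1$. On the complementary region $\{\|t\|>1\}$ the inequality reverses: $\|t\|^\beta\geq\|t\|^\tau$, so $(\|t\|^\tau+\|t\|^\beta)^\alpha\geq\|t\|^{\alpha\beta}$, and the integrand is controlled by $C^\alpha\|t\|^{\eta-\alpha\beta}$. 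The corresponding radial integral $\int_1^\infty r^{\eta-\alpha\beta+d-1}\,dr$ converges because the hypothesis $\alpha\beta-\eta>d$ yields $\eta-\alpha\beta+d-1<-1$.

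Adding the two bounds gives the required finiteness. No real obstacle arises here: the whole argument is a routine comparison followed by a one-dimensional radial integrability check, and the two hypotheses are tailored exactly to ensure convergence at the origin and at infinity respectively. The only subtlety worth flagging in the write-up is that the two-term denominator $\|t\|^\tau+\|t\|^\beta$ avoids the singularity that a single-term bound $C/\|t\|^\tau$ would force at the origin, which is precisely why a single kernel can be $L^1$ against the weight $\|t\|^\eta$.
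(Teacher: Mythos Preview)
Your proof is correct and follows essentially the same approach as the paper: bound the integrand via the pointwise hypothesis, pass to spherical coordinates, split the radial integral at $r=1$, and use the two exponent conditions to verify convergence near $0$ and at infinity. The only cosmetic difference is that the paper drops one summand directly via $(r^\tau+r^\beta)^\alpha\geq r^{\alpha\tau}$ and $(r^\tau+r^\beta)^\alpha\geq r^{\alpha\beta}$ without first reducing to $\tau\leq\beta$, so your WLOG step is harmless but unnecessary.
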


\begin{proof}
    We have,
    \begin{align*}
     &    {\int_{\mathbb{R}^d}\abs{K(-t)}^\alpha\norm{t}^\eta\,d\lambda(t)}  \leq C\int_{\mathbb{R}^d}\frac{1}{(\norm{t}^\tau+\norm{t}^\beta)^\alpha}\norm{t}^\eta\,d\lambda(t)
        \\
    &    =C\int_{S^{d-1}}\int_{0}^\infty\frac{1}{(r^\tau+r^\beta)^\alpha}r^\eta r^{d-1}\,dr\,d\sigma(y)
        \\ 
    &    \leq C\int_{S^{d-1}}\left[\int_{0}^1\frac{r^{\eta+d-1}}{r^{\alpha\tau}}\,dr+\int_{1}^\infty\frac{r^{\eta +d-1}}{r^{\alpha\beta}}\,dr\right]\,d\sigma(y) 
    \end{align*}
which are finite by our hypotheses on the exponents.
\end{proof}

\begin{lemma} \label{varZn1 expectationWnj}   
Under the assumption of Theorem~\ref{thm2.4} the term $Z_{n,1}$ in~\eqref{eqnZ_n,1} satisfies
\begin{equation}
\begin{split}
 \Var(Z_{n,1})&=\sum_{1\leq i,j \leq d}\frac{\partial f}{\partial x_i}(p)\frac{\partial f}{\partial x_j}(p)\int_{\mathbb{R}^d}K^2(-t)t_it_jg(p+\epsilon t)\,d\lambda(t)\\
        &\quad +\epsilon^d\left(\sum_{j=1}^d\frac{\partial f}{\partial x_j}(p)\int_{\mathbb{R}^d}K(-t)t_jg(p+\epsilon t)\, d\lambda(t)\right)^2
\end{split}
\end{equation}
    and there is a constant $C_1>0$ such that the term $W_{n,j}$ in~\eqref{W_n,j} satisfies 
    \begin{equation*}
    \mathbb{E}\abs{W_{n,j}}^4 \leq C_1 \frac{\|\nabla f(p)\|^4}{n^2\epsilon^d}
    \end{equation*}
\end{lemma}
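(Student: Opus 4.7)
The plan has two parts corresponding to the two conclusions of the lemma. For the variance formula, I would use the i.i.d. structure so that
\begin{equation*}
\Var(Z_{n,1}) = \frac{n}{n\epsilon^{d+2}}\Var\Bigl(K\bigl(\tfrac{-Y}{\epsilon}\bigr)\nabla f(p)\cdot Y\Bigr) = \frac{1}{\epsilon^{d+2}}\Bigl(\mathbb{E}[A^2] - (\mathbb{E}A)^2\Bigr),
\end{equation*}
where $A = K(-Y/\epsilon)\nabla f(p)\cdot Y$ and $Y = X - p$ has density $g(p+\cdot)$. Expanding $\nabla f(p)\cdot Y = \sum_i \partial_i f(p) Y_i$ gives
\begin{equation*}
\mathbb{E}[A^2] = \sum_{i,j}\partial_i f(p)\partial_j f(p)\int K^2\bigl(\tfrac{-y}{\epsilon}\bigr) y_i y_j\, g(p+y)\,d\lambda(y),
\end{equation*}
and the change of variables $y = \epsilon t$ produces exactly $\epsilon^{d+2}$, which cancels the $1/\epsilon^{d+2}$ prefactor to yield the first (quadratic) term of the claimed formula. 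An analogous calculation for $\mathbb{E}A$ yields
\begin{equation*}
\mathbb{E}A = \epsilon^{d+1}\sum_{j}\partial_j f(p)\int K(-t) t_j\, g(p+\epsilon t)\,d\lambda(t),
\end{equation*}
so $(\mathbb{E}A)^2$ carries a prefactor $\epsilon^{2d+2}$, which after division by $\epsilon^{d+2}$ contributes the $\epsilon^d$-weighted second term (with the sign required by $\Var = \mathbb{E}[A^2] - (\mathbb{E}A)^2$). Finiteness of both integrals at every $\epsilon>0$ and uniformly as $\epsilon\to0$ follows from the moment hypothesis~\eqref{eq-Kt4} together with boundedness of $g$, invoking Lemma~\ref{NewFiniteMoment} with the weight $h(t) = \|t\|$.

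For the fourth-moment bound, the elementary inequality $(a-b)^4 \leq 8(a^4+b^4)$ together with Jensen's inequality $|\mathbb{E}A|^4 \leq \mathbb{E}|A|^4$ reduces the task to estimating $\mathbb{E}|A|^4$. By Cauchy--Schwarz on the inner product,
\begin{equation*}
\mathbb{E}|A|^4 \leq \|\nabla f(p)\|^4\, \mathbb{E}\Bigl[K^4\bigl(\tfrac{-Y}{\epsilon}\bigr)\|Y\|^4\Bigr] = \|\nabla f(p)\|^4 \epsilon^{d+4}\int K^4(-t)\|t\|^4\, g(p+\epsilon t)\,d\lambda(t),
\end{equation*}
after the same change of variables $y=\epsilon t$. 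The integral is finite by \eqref{eq-Kt4} and $\|g\|_\infty<\infty$. Combining with the prefactor $(n\epsilon^{d+2})^{-2}$ in the definition of $W_{n,j}^4$ gives
\begin{equation*}
\mathbb{E}|W_{n,j}|^4 \leq \frac{C\,\|\nabla f(p)\|^4 \epsilon^{d+4}}{n^2\epsilon^{2d+4}} = \frac{C_1\,\|\nabla f(p)\|^4}{n^2\epsilon^d},
\end{equation*}
as required.

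The proof is essentially bookkeeping of $\epsilon$-powers after a single substitution; the only nontrivial point is ensuring the integrals that appear are finite uniformly in $\epsilon$, and this is exactly what the moment hypothesis~\eqref{eq-Kt4} and Lemma~\ref{NewFiniteMoment} are designed to deliver. I do not anticipate any serious obstacle.
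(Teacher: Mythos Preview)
Your proposal is correct and follows essentially the same route as the paper: reduce $\Var(Z_{n,1})$ to $\epsilon^{-(d+2)}\Var\bigl(K(-Y/\epsilon)\nabla f(p)\cdot Y\bigr)$ by the i.i.d.\ structure, compute $\mathbb{E}A^2$ and $(\mathbb{E}A)^2$ via the substitution $y=\epsilon t$, and for the fourth moment bound $\mathbb{E}|W_{n,j}|^4$ by $\|\nabla f(p)\|^4\,\mathbb{E}\bigl(K(-Y/\epsilon)\|Y\|\bigr)^4$ after handling the centering. Your explicit use of $(a-b)^4\le 8(a^4+b^4)$ together with Jensen is a cleaner justification of that centering step than the paper's one-line bound, and your parenthetical remark about the sign is well taken---the $\epsilon^d$ term in the displayed variance formula should indeed carry a minus sign.
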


\begin{proof}
Using independence of $Y_j=X_j-p$ at the first step and the fact that all $Y_j$ have the distribution of $Y=X-p$ at the second step
\begin{align}
    \Var(Z_{n,1})&=\frac{1}{n\epsilon^{d+2}}\sum_{j=1}^n \Var\Bigl(K(\frac{-Y_j}{\epsilon})\nabla f(p)\cdot(Y_j)-\mathbb{E}K(\frac{-Y_j}{\epsilon})\nabla f(p)\cdot(Y_j)\Bigr) \nonumber \\
   &=\frac{1}{\epsilon^{d+2}} \Var\bigl(K\Bigl(\frac{-Y}{\epsilon}\bigr)\nabla f(p)\cdot Y\Bigr) \label{variance}
   \end{align}
Then compute,
\begin{align}
    \mathbb{E}K^2\Bigl(\frac{-Y}{\epsilon}\Bigr) \bigl(\nabla f(p)\cdot Y \bigr)^2
    &= \int_{\mathbb{R}^d}K^2\Bigl( \frac{p-y}{\epsilon}\Bigr) \biggl( \sum_{j=1}^d (y_j-p) \frac{\partial f}{\partial x_j}(p) \biggr)^2 g(y)\,d\lambda(y) \notag \\
    &=\int_{\mathbb{R}^d}K^2(-t)\Bigl( \sum_{j=1}^d\epsilon t_j \frac{\partial f}{\partial x_j}(p) \Bigr)^2 g(p+\epsilon t) \epsilon^d\,d\lambda(t) \notag\\
    &=\epsilon^{d+2}\sum_{1\leq i,j \leq d}\frac{\partial f}{\partial x_i}(p)\frac{\partial f}{\partial x_j}(p) \int\limits_{\mathbb{R}^d}K^2(-t)t_it_jg(p+\epsilon t)\,d\lambda(t),
\label{first term in variance}
\end{align}
and similarly
\begin{align}
    \mathbb{E}K\Bigl(\frac{-Y}{\epsilon}\Bigr)\nabla f(p)\cdot Y 
    &= \int_{\mathbb{R}^d}K\bigl( \frac{p-y}{\epsilon}\bigr) \sum_{j=1}^d  (y_j-p) \frac{\partial f}{\partial x_j}(p) g (y)\,d\lambda(y) \notag\\
    &=\epsilon^{d+1}\sum_{j=1}^d\frac{\partial f}{\partial x_j}(p) \int_{\mathbb{R}^d}K(-t)t_jg(p+\epsilon t)\,d\lambda(t).
\label{second term in variance}
\end{align}
Substituting~\eqref{first term in variance} and~\eqref{second term in variance} into~\ref{variance} gives the expression for $\Var(Z_{n,1})$.

Turning to the estimate of $\mathbb{E}\abs{W_{n,j}}^2$, we see that it is independent of $j$ and given by
\begin{align*}
	\mathbb{E}|W_{n,j}|^4
	&= \frac1{n^2\epsilon^{2d+4}} \mathbb{E}  \Bigl( K(\frac{-Y}{\epsilon})\nabla f(p)\cdot Y-\mathbb{E}K(\frac{-Y}{\epsilon})\nabla f(p)\cdot Y \Bigr)^4\\
	&\leq  \frac{\|\nabla f(p)\|^4}{n^2\epsilon^{2d+4}} \mathbb{E}  \Bigl( K(\frac{-Y}{\epsilon})  \|Y\|\Bigr)^4\\
	&\leq \frac{\|\nabla f(p)\|^4}{n^2\epsilon^{2d+4}} \int_{\mathbb{R}^d} \Bigl|K\bigl(\frac{p-y}{\epsilon}\bigr)\Bigr|^4 \|y-p\|^4 g(y)\,d\lambda(y)\\
	&\leq  \frac{\|\nabla f(p)\|^4}{n^2\epsilon^{d}}  \int_{\mathbb{R}^d} K(-t) \|t\|^4 g(p+\epsilon t) d\lambda(t).
	\qedhere
\end{align*}
\end{proof} 

\begin{definition}\label{delta}
    Define the operator $\Delta_K$ on the class of $C^2$ functions by,
     \begin{align}   
    \Delta_K f(p):=&\sum_{i,j=1}^d\frac{\partial f} {\partial x_i}(p)\frac{\partial g}{\partial x_j}(p)\int_{\mathbb{R}^d}K(-t)t_it_jd\lambda(t) \\
    &+ \frac{g(p)}{2}\sum_{i,j=1}^d\frac{\partial^2 f}{\partial x_i \partial x_j}(p)\int_{\mathbb{R}^d}K(-t)t_it_jd\lambda(t)\nonumber
    \end{align}
\end{definition}
    
\begin{theorem}
\label{thm0.4}
    Let $X$ be an $\mathbb{R}^d$-valued random variable such that its density $g$ that has bounded $C^1$ norm. If $f:\mathbb{R}^d\to\mathbb{R}$ has bounded $C^2$ norm and $K$  is a kernel satisfying both
\begin{equation}\label{eqn:thm0.4assumption}
\int_{\mathbb{R}^d}\norm{t}^2\abs{K(t)}\,d\lambda(t)<\infty \text{ and } \nabla f(p)\cdot\Bigl(\int_{\mathbb{R}^d}K(-t) t\,d\lambda(t)\Bigr)=0,
\end{equation}
then  $D_\epsilon f(p)\to \Delta_K f(p)$ as $\epsilon\to0$.
\end{theorem}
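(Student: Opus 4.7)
The plan is to pass to the rescaled variable $t=(y-p)/\epsilon$ in the definition of $D_\epsilon f(p)$, writing
\begin{equation*}
D_\epsilon f(p) = \frac{1}{\epsilon^2}\int_{\mathbb{R}^d} K(-t)\,\bigl[f(p+\epsilon t)-f(p)\bigr]\,g(p+\epsilon t)\,d\lambda(t),
\end{equation*}
and then Taylor-expand: $f(p+\epsilon t)-f(p) = \epsilon\,\nabla f(p)\cdot t + R_{p,1}(\epsilon t)$ (valid since $f\in C^2$, with Lagrange remainder satisfying $|R_{p,1}(\epsilon t)|\leq\tfrac12\sup_{|\alpha|=2}|\partial^\alpha f|\,\epsilon^2\|t\|^2$), and $g(p+\epsilon t) = g(p) + \bigl[g(p+\epsilon t)-g(p)\bigr]$, where $|g(p+\epsilon t)-g(p)|\leq\|\nabla g\|_\infty\,\epsilon\|t\|$ since $g\in C^1$ with bounded norm.

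Multiplying out the integrand and dividing by $\epsilon^2$ decomposes it into three pieces:
\begin{equation*}
\frac{g(p)}{\epsilon}\,\nabla f(p)\cdot t \;+\; \nabla f(p)\cdot t\cdot\frac{g(p+\epsilon t)-g(p)}{\epsilon} \;+\; g(p+\epsilon t)\cdot\frac{R_{p,1}(\epsilon t)}{\epsilon^2}.
\end{equation*}
Integrating the first piece against $K(-t)$ yields $\tfrac{g(p)}{\epsilon}\,\nabla f(p)\cdot\int K(-t)\,t\,d\lambda(t)$, which vanishes for every $\epsilon>0$ by the cancellation hypothesis in~\eqref{eqn:thm0.4assumption}. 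For the second piece I would bound the integrand in modulus by $\|\nabla f(p)\|\,\|\nabla g\|_\infty\,\|t\|^2\,|K(-t)|$, an $L^1(d\lambda)$ dominator thanks to the moment hypothesis in~\eqref{eqn:thm0.4assumption}; since $[g(p+\epsilon t)-g(p)]/\epsilon\to\nabla g(p)\cdot t$ pointwise, dominated convergence delivers $\sum_{i,j}\partial_i f(p)\,\partial_j g(p)\int K(-t)\,t_it_j\,d\lambda(t)$, which is the first summand of $\Delta_Kf(p)$. For the third piece the dominator $\tfrac12\|g\|_\infty\,\sup_{|\alpha|=2}|\partial^\alpha f|\,\|t\|^2\,|K(-t)|$ is again in $L^1(d\lambda)$, and the integral form of the Taylor remainder together with continuity of the Hessian gives the pointwise limit $R_{p,1}(\epsilon t)/\epsilon^2\to\tfrac12\sum_{i,j}\partial_i\partial_j f(p)\,t_it_j$, so a second application of dominated convergence recovers the remaining summand $\tfrac{g(p)}{2}\sum_{i,j}\partial_i\partial_j f(p)\int K(-t)\,t_it_j\,d\lambda(t)$ of $\Delta_Kf(p)$.

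The main obstacle is conceptual rather than technical: the only contribution whose naive size is $O(1/\epsilon)$ is the first piece, and it must be annihilated exactly to obtain a finite limit. This is precisely the role of the cancellation hypothesis in~\eqref{eqn:thm0.4assumption}. The remaining work is to justify passage of Taylor's pointwise remainder estimates under the integral sign via dominated convergence; the bounded $C^2$ norm of $f$ and bounded $C^1$ norm of $g$ supply envelopes proportional to $\|t\|^2\,|K(-t)|$ that are independent of $\epsilon$, matching the integrability hypothesis on $K$ exactly.
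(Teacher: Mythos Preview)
Your proof is correct and follows essentially the same route as the paper's: change variables to $t=(y-p)/\epsilon$, Taylor-expand $f$ to first order with Lagrange remainder, use the cancellation hypothesis to kill the $O(1/\epsilon)$ contribution, and then pass to the limit in the remaining two pieces by dominated convergence with envelope $\|t\|^2|K(-t)|$. The only cosmetic difference is that the paper writes $g(p+\epsilon t)=g(p)+\nabla g(p+c\epsilon t)\cdot\epsilon t$ via the Lagrange form rather than your difference-quotient $[g(p+\epsilon t)-g(p)]/\epsilon$, and then subtracts $\Delta_Kf(p)$ explicitly before estimating; both lead to the same DCT argument.
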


\begin{proof}
For the first part, using the Taylor expansion {equation}~(\ref{Taylor}) of $f$, we have
    \begin{align*}
    D_{\epsilon}f(p)=&\frac{1}{\epsilon^{d+2}}\mathbb{E}K\left(\frac{p-X}{\epsilon}\right)\left(f(X)-f(p)\right)\\
        =&\frac{1}{\epsilon^{d+2}}\int_{\mathbb{R}^d}K\left(\frac{p-X}{\epsilon}\right)\nabla f(p)\cdot(y-p)g(y)\,d\lambda(y)\\
        &+\frac{1}{\epsilon^{d+2}}\int_{\mathbb{R}^d}K\left(\frac{p-X}{\epsilon}\right)R_{p,1}(y-p)g(y)\,d\lambda(y)
    \end{align*}
Each of  these terms may be simplified by a change of variables. For the first we then use Taylor expansion of $g$, which gives $c=c(p,\epsilon,t)\in(0,1)$ so that
    \begin{align*}
       \lefteqn{ \frac{1}{\epsilon}\int_{\mathbb{R}^d}K(-t)\nabla f(p)\cdot tg(p+\epsilon t)\,d\lambda(t)}\quad&\\
        &=\frac{1}{\epsilon}\int_{\mathbb{R}^d}K(-t)\nabla f(p)\cdot t\left(g(p)+\nabla g(p+c\epsilon t)\cdot \epsilon t\right)\,d\lambda(t)\\ 
        &=\int_{\mathbb{R}^d}K(-t)\left(\nabla f(p)\cdot t\right)\left(\nabla g(p+c\epsilon t)\cdot t\right)\,d\lambda(t).
    \end{align*}
Note that the term involving $g(p)$ was zero because of our cancelation hypothesis on $K$.  
For the second term the change of variables and the expression~\eqref{eqn:Taylorremainder} for the Taylor remainder $R_{p,1}$ give the following, in which  $r=r(p,\epsilon,t)\in(0,1)$ is used for the location at which the Hessian is evaluated in~\eqref{eqn:Taylorremainder}.
    \begin{align*}
        &\frac{1}{\epsilon^{d+2}}\int_{\mathbb{R}^d}K(-t)R_{p,1}(\epsilon t)g(p+\epsilon t)\epsilon^d\,d\lambda(t)\\
        &=\frac{1}{2}\int_{\mathbb{R}^d}K(-t)\bigl( t^THf(p+r\epsilon t)t\bigr) g(p+\epsilon t)\,d\lambda(t)
    \end{align*}

Substituting for these two terms in the  expression for $D_\epsilon f(p)$ and subtracting $\Delta_K$ gives a bound
   \begin{align*}
       \abs{D_{\epsilon}f(p)-\Delta_K f(p)} 
	&\leq \int_{\mathbb{R}^d}\abs{K(-t)}\norm{\nabla f(p)}\norm{t}^2\norm{\nabla g(p+c\epsilon t)-\nabla g(p)}\,d\lambda(t)\\
	+&\frac{1}{2}\int_{\mathbb{R}^d}\abs{K(-t)}\norm{t}^2\norm{g(p)Hf(p)-g(p+\epsilon t)Hf(p+r\epsilon t)}_{op}\,d\lambda(t)
   \end{align*}
from which the result follows from the integrability of $\abs{K(t)}\norm{t}^2$ and the assumptions that $g$ has bounded $C^1$ norm and $f$ has bounded $C^2$ norm.
   \end{proof}

The proof of the next result is similar to the previous one but uses a Taylor expansion of one higher order.  We use the notation $C^{k,\theta}$ for functions for which the $k^\text{th}$ derivatives are H\"older continuous with exponent $\theta\in[0,1]$, with the case $\theta=0$ being continuity and $\theta=1$ being Lipschitz continuity.  Recall that if $f$ has bounded $C^{k,\theta}$ norm we have an estimate of the error $R_{a,k}$ in the order $k$ Taylor expansion as follows.  Using~\eqref{Taylor} for both the order $k-1$ and $k$ expansions and inserting~\eqref{eqn:Taylorremainder} for the remainder $R_{a,k-1}$ lets us write for some $c\in(0,1)$
\begin{align*}	
	R_{a,k}(h)
	&= f(a+h)- \sum_{|\alpha|\leq k}\frac{\partial f(a)}{\alpha !}h^\alpha\\
	&= R_{a,k-1}(h) -  \sum_{|\alpha|=k}\frac{\partial f(a)}{\alpha !}h^\alpha  \\
	&=\sum_{|\alpha|=k}\bigl(\partial^\alpha f(a+ch)-\partial^\alpha f(a)\bigr)\frac{h^\alpha}{\alpha!}
\end{align*}
so that $|R_{a,k}(h)|\leq M \|h\|^{k+\theta}$ for some constant $M$ involving the $C^{k,\theta}$ norm.

   \begin{theorem}\label{thm0.5}
       Let $X$ be an $\mathbb{R}^d$-valued random variable such that its density $g$ has bounded $C^{1,\theta}$ norm  for some $0\leq\theta\leq1$. If $f:\mathbb{R}^d\to\mathbb{R}$ has bounded $C^{2,\theta}$ norm and $K$ is a kernel satisfying both
       \begin{equation*}
       \int_{\mathbb{R}^d} \left(\norm{t}^{2+\theta}+\norm{t}^{3+\theta} \right)\abs{K(t)}\,d\lambda(t)<\infty, \text{ and } 
\nabla f(p)\cdot \Bigl( \int_{\mathbb{R}^d}K(-t)  t\,d\lambda(t) \Bigr)=0,
       \end{equation*}
       then $ \abs{D_\epsilon f(p)-\Delta_K f(p)}=O(\epsilon^\theta)$ as $\epsilon\to0$.
   \end{theorem}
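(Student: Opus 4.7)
The plan is to follow the proof of Theorem~\ref{thm0.4} but go one order higher in the Taylor expansion of both $f$ and $g$, using the H\"older continuity of the top-order derivatives to give quantitative bounds on the remainders. After the change of variables $X=p+\epsilon t$, I would substitute
\begin{align*}
f(p+\epsilon t)-f(p)&=\epsilon\,\nabla f(p)\cdot t+\tfrac{\epsilon^{2}}{2}\,t^{T}Hf(p)\,t+R_{p,2}(\epsilon t),\\
g(p+\epsilon t)&=g(p)+\epsilon\,\nabla g(p)\cdot t+R_{p,1}^{g}(\epsilon t),
\end{align*}
where the remark preceding the theorem furnishes $\abs{R_{p,2}(\epsilon t)}\leq M_{f}\epsilon^{2+\theta}\norm{t}^{2+\theta}$ and $\abs{R_{p,1}^{g}(\epsilon t)}\leq M_{g}\epsilon^{1+\theta}\norm{t}^{1+\theta}$.

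Multiplying out the product $[f(p+\epsilon t)-f(p)]\,g(p+\epsilon t)$ and dividing by $\epsilon^{2}$, I would sort the resulting nine cross terms as follows: the linear-in-$t$ contribution $g(p)\epsilon^{-1}\nabla f(p)\cdot\int K(-t)\,t\,d\lambda(t)$ vanishes by the cancellation hypothesis, while the two cross products $\int K(-t)(\nabla f(p)\cdot t)(\nabla g(p)\cdot t)\,d\lambda$ and $\tfrac{g(p)}{2}\int K(-t)\,t^{T}Hf(p)t\,d\lambda$ reproduce precisely the two pieces of $\Delta_{K}f(p)$ from Definition~\ref{delta}. Each of the remaining six cross terms, after dividing by $\epsilon^{2}$, is bounded by a constant multiple of one of
\begin{align*}
\epsilon^{\theta}\int \abs{K(-t)}\norm{t}^{2+\theta}\,d\lambda(t),\quad
\epsilon\int \abs{K(-t)}\norm{t}^{3}\,d\lambda(t),\quad
\epsilon^{1+\theta}\int \abs{K(-t)}\norm{t}^{3+\theta}\,d\lambda(t);
\end{align*}
for the product-of-two-remainders term I would use the crude Lipschitz bound $\abs{R_{p,1}^{g}(\epsilon t)}\leq 2\norm{\nabla g}_{\infty}\epsilon\norm{t}$ so that no integral against $\norm{t}^{3+2\theta}$ is required.

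Under the stated hypothesis the first and third of these displayed integrals are finite by assumption, and the middle one is finite because $\norm{t}^{3}\leq\norm{t}^{2+\theta}$ on the unit ball and $\norm{t}^{3}\leq\norm{t}^{3+\theta}$ outside it for $\theta\in[0,1]$. Since $\epsilon$ and $\epsilon^{1+\theta}$ are both $O(\epsilon^{\theta})$ as $\epsilon\to 0$, summing the estimates yields $\abs{D_{\epsilon}f(p)-\Delta_{K}f(p)}=O(\epsilon^{\theta})$. The main obstacle is really just the bookkeeping: tracking the six residual cross terms and verifying that their combined integrability requirements are met by the two hypotheses on $K$, so that the assumed integrability of $(\norm{t}^{2+\theta}+\norm{t}^{3+\theta})\abs{K(t)}$ is precisely what the argument needs.
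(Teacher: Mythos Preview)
Your proposal is correct and is essentially the same argument as the paper's: the paper organizes the computation by first splitting $D_\epsilon f(p)$ into three pieces $A_{n,1}+A_{n,2}+A_{n,3}$ according to the Taylor expansion of $f$ and then expands $g$ inside each (using only $\|g\|_\infty$ for $A_{n,3}$), whereas you expand $f$ and $g$ simultaneously and sort the nine cross terms, but the estimates and the role of the integrability hypotheses are identical.
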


 \begin{proof}
Use  Remark~\ref{TaylorRemark} to write
\begin{equation*}
        D_\epsilon f(p)
	=\frac{1}{\epsilon^{d+2}}\mathbb{E}K\left(\frac{p-X}{\epsilon}\right)(f(X)-f(p)) 
	= A_{n,1}+A_{n,2}+A_{n,3}
    \end{equation*}
where
    \begin{align*}
        A_{n,1}&=\frac{1}{\epsilon^{d+2}}\int_{\mathbb{R}^d}K(\frac{p-y}{\epsilon})\nabla f(p)\cdot(y-p)g(y)d\lambda(y),\\
        A_{n,2}&=\frac{1}{\epsilon^{d+2}}\int_{\mathbb{R}^d}K(\frac{p-y}{\epsilon})\frac{1}{2}(y-p)^THf(p)(y-p)g(y)d\lambda(y),\\
        A_{n,3}&=\frac{1}{\epsilon^{d+2}}\int_{\mathbb{R}^d}K(\frac{p-y}{\epsilon})R_{p,2}(y)g(y)d\lambda(y).\\
    \end{align*}

And after Taylor expansion of $g$ and simplification using the cancellation hypothesis on $K$, we obtain that
\begin{align*}
        A_{n,1}
        &=\frac{1}{\epsilon^{d+2}}\int_{\mathbb{R}^d}K(-t)\nabla f(p)\cdot(\epsilon t)g(p+\epsilon t) \epsilon^d d\lambda(t)\\ 
        &=\frac{1}{\epsilon}\sum_{i=1}^d\frac{\partial f}{\partial x_i} (p)\int_{\mathbb{R}^d}K(-t) t_i g(p+\epsilon t)d\lambda(t)\\
        &=\frac{1}{\epsilon}\sum_{i=1}^d\frac{\partial f}{\partial x_i} (p)\int_{\mathbb{R}^d}K(-t) t_i \Big[g(p)+\nabla g(p)\cdot \epsilon t + R_{p,1}(\epsilon t)\Big]d\lambda(t)\\
        &= \sum_{i,j=1}^d\frac{\partial f}{\partial x_i}(p)\frac{\partial g}{\partial x_j}(p)\int_{\mathbb{R}^d}K(-t)t_it_jd\lambda(t) 
        + O(\epsilon^\theta),
        \end{align*}
where the last line is justified using the bound  $|R_{p,1}(\epsilon t)|\leq M \|\epsilon t\|^{1+\theta}$ and integrating:
    \begin{align*}
     \lefteqn{ \frac{1}{\epsilon}\sum_{i=1}^d\abs{\frac{\partial f}{\partial x_i}(p)\int_{\mathbb{R}^d}K(-t)t_iR_{p,1}(\epsilon t)d\lambda(t)} }\quad &\\
        &\leq M\epsilon^\theta  \sum_i \Bigl|\frac{\partial f}{\partial x_i}(p)\Bigr| \int_{\mathbb{R}^d}\abs{K(-t)}\|t\|^{2+\theta} d\lambda(t).
    \end{align*}
  
The computation for $A_{n,2}$ is similar but does not rely on the cancellation of $K$.
    \begin{align*}
        A_{n,2}
        &=\frac{1}{2\epsilon^{d+2}}\int_{\mathbb{R}^d}K(-t)(\epsilon t)^THf(p)(\epsilon t)g(p+\epsilon t)\epsilon^dd\lambda(t)\\
        &=\frac{1}{2}\int_{\mathbb{R}^d}K(-t)t^THf(p)t\Big[g(p)+\nabla g(p)\cdot \epsilon t +R_{p,1}(t)\Big]d\lambda(t)\\
        &= \frac{g(p)}{2}\sum_{i,j=1}^d\frac{\partial^2 f}{\partial x_i \partial x_j}(p)\int_{\mathbb{R}^d}K(-t)t_it_jd\lambda(t)  + O(\epsilon) + O(\epsilon^{1+\theta})
    \end{align*}
because the second term is bounded by
    \begin{equation*}
    \frac\epsilon 2 \norm{Hf(p)}_{\text{op}}\norm{\nabla g(p)}  \int_{\mathbb{R}^d}\abs{K(-t)}\norm{t}^3d\lambda(t) =O(\epsilon),
    \end{equation*} 
and the third term is bounded by 
    \begin{equation*}
    M \epsilon^{1+\theta} \norm{Hf(p)}_{\text{op}}  \int_{\mathbb{R}^d}\abs{K(-t)}\norm{t}^{3+\theta} d\lambda(t) =O(\epsilon^{1+\theta}).
    \end{equation*}
    
For $A_{n,3}$ we need only use that $g$ is bounded and the estimate for the remainder in the second order Taylor expansion of $f$, which is $\|R_{p,2}(p+\epsilon t)\|\leq M(\epsilon t)^{2+\theta}$,  to obtain
    \begin{align*}
        \abs{A_{n,3}}&\leq \frac{1}{\epsilon^{d+2}}\int_{\mathbb{R}^d}\abs{K(-t)}\abs{R_{p,2}(p+\epsilon t)}\abs{g(p+\epsilon t)}\epsilon^d\,d\lambda(t)\\
        &\leq \frac{M\|g\|_\infty} {\epsilon^2} \int_{\mathbb{R}^d}\abs{K(-t)} \|\epsilon t\|^{2+\theta}\,d\lambda(t)\\
        &=O(\epsilon^\theta).
    \end{align*}
The conclusion follows.
\end{proof}

 \begin{cor}\label{combine}
     Let $\{X,X_j\}$ be i.i.d random variables taking values in $\mathbb{R}^d$ 
and conditions of Theorems~\ref{thm2.4}, \ref{thm0.4} and \ref{thm0.5}     are satisfied for some $0\leq\theta\leq1$. 
If $\epsilon_n>0$ is a sequence such that $n\epsilon_n^{d+2+2\theta}\to0$ and  $n\epsilon_n^d \to \infty$,  then as $n\to\infty$
\begin{equation*}
\sqrt{n\epsilon_n^{d+2}}(D_{\epsilon_n,n}f(p)-\Delta_K f(p)) \overset{d}\to s\mathcal{Z},
\end{equation*} 
where  $\mathcal{Z}$ has the standard normal distribution $N(0,1)$ and 
\begin{equation*}
s^2 = g(p) \sum_{i,j=1}^d  \frac{\partial f}{\partial x_i}(p)\frac{\partial f}{\partial x_j}(p)\int_{\mathbb{R}^d}K^2(-t)t_it_j d\lambda(t).
\end{equation*}
In the case $\theta=0$ it is sufficient that $n\epsilon_n^{d+2}$ is bounded above rather than converging to zero to obtain this result.
 \end{cor}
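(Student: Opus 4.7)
The plan is to decompose the target quantity into a stochastic fluctuation and a deterministic bias, and then apply Slutsky's theorem. Specifically, write
\begin{equation*}
\sqrt{n\epsilon_n^{d+2}}\bigl(D_{\epsilon_n,n}f(p)-\Delta_K f(p)\bigr)
=\sqrt{n\epsilon_n^{d+2}}\bigl(D_{\epsilon_n,n}f(p)-D_{\epsilon_n}f(p)\bigr)
+\sqrt{n\epsilon_n^{d+2}}\bigl(D_{\epsilon_n}f(p)-\Delta_K f(p)\bigr).
\end{equation*}
The first summand on the right is the random fluctuation of the empirical graph Laplacian around its expectation, while the second summand is a deterministic bias measuring how far $D_{\epsilon_n}$ is from the limit operator $\Delta_K$. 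Slutsky's theorem then reduces the corollary to proving that the first summand converges in distribution to $s\mathcal{Z}$ and the second summand converges to zero.

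First I would verify that the hypotheses of Theorem~\ref{thm2.4} are in force. Since $n\to\infty$ together with $n\epsilon_n^{d+2+2\theta}\to 0$ (or, in the case $\theta=0$, $n\epsilon_n^{d+2}$ bounded) forces $\epsilon_n\to 0$, and $n\epsilon_n^d\to\infty$ is assumed, Theorem~\ref{thm2.4} applies and yields the convergence of the first summand to $s\mathcal{Z}$ with the stated variance. The integrability condition~\eqref{eq-Kt4} needed by Theorem~\ref{thm2.4} is already part of the hypotheses being imported.

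For the bias term, if $\theta>0$ then Theorem~\ref{thm0.5} gives $|D_{\epsilon_n}f(p)-\Delta_K f(p)|=O(\epsilon_n^{\theta})$, whence
\begin{equation*}
\Bigl|\sqrt{n\epsilon_n^{d+2}}\bigl(D_{\epsilon_n}f(p)-\Delta_K f(p)\bigr)\Bigr|
=O\bigl(\sqrt{n\epsilon_n^{d+2+2\theta}}\bigr)\longrightarrow 0
\end{equation*}
by the assumption $n\epsilon_n^{d+2+2\theta}\to 0$. If instead $\theta=0$, Theorem~\ref{thm0.5} is not sharp enough, but Theorem~\ref{thm0.4} still asserts the qualitative convergence $D_{\epsilon_n}f(p)-\Delta_K f(p)\to 0$ as $\epsilon_n\to 0$; multiplying by the factor $\sqrt{n\epsilon_n^{d+2}}$, which is by assumption bounded, produces a product of a bounded quantity with an $o(1)$ term, and so the bias again vanishes. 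This explains the relaxation of the scaling hypothesis in the last sentence of the corollary.

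The two pieces combine via Slutsky's theorem to give the stated distributional limit. There is no genuine obstacle here: the corollary is a packaging result that stitches together Theorems~\ref{thm2.4}, \ref{thm0.4} and \ref{thm0.5}. The only points that require care are (i) checking that the smoothness and kernel-integrability assumptions of the three theorems are mutually consistent with the hypotheses being imported, and (ii) correctly matching the exponents so that the rate $\epsilon_n^{\theta}$ from the bias estimate precisely cancels against the scaling $\sqrt{n\epsilon_n^{d+2}}$ under the prescribed regime $n\epsilon_n^{d+2+2\theta}\to 0$.
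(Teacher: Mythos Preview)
Your proposal is correct and follows essentially the same approach as the paper: decompose into the stochastic fluctuation handled by Theorem~\ref{thm2.4} and the deterministic bias handled by Theorem~\ref{thm0.5} (for $\theta>0$) or Theorem~\ref{thm0.4} (for $\theta=0$), then combine via Slutsky. Your write-up is in fact slightly more explicit than the paper's, as you spell out why $\epsilon_n\to 0$ and name Slutsky's theorem, but the argument is the same.
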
 

 \begin{proof} 
We have that 
\begin{align}
     \lefteqn{\sqrt{n\epsilon_n^{d+2}}(D_{\epsilon_n,n}f(p) - \Delta_K f(p))}&\quad  \label{triangleCorollary}\\
	&=\sqrt{n\epsilon^{d+2}}(D_{\epsilon_n,n}f(p) - D_{\epsilon_n} f(p))
	+\sqrt{n\epsilon_n^{d+2}}(D_{\epsilon_n} f(p) - \Delta_K f(p)) \nonumber 
     \end{align} 
and by Theorem~\ref{thm2.4} the first term converges in distribution to a centered normal random variable with variance $s^2$.
If $\theta\in(0,1]$ we apply Theorem~\ref{thm0.5} to see the second term is bounded by a constant multiple of $\sqrt{ n\epsilon^{d+2+2\theta}}$ which converges to zero by the hypothesis.  In the case $\theta=0$ we may use the weaker hypothesis that $n\epsilon^{d+2}$ is bounded above and Theorem~\ref{thm0.4} to see the same result.  
 \end{proof}

 \section{Asymptotically vanishing correlations}\label{sec:corr}
Under certain assumptions, the limiting deviations $\sqrt{n\epsilon^{d+2}}(D_{\epsilon,n}f(p)-D_{\epsilon}f(p))$ are uncorrelated for distinct $p$ values, as described in the following result.
\begin{theorem}\label{thm:correlation}
Suppose $K$ satisfies the assumptions of Theorem~\ref{thm2.4}. 
If $p_1\neq p_2$ then $$Z_n(p)=\sqrt{n\epsilon_n^{d+2}}(D_{\epsilon_n,n}f(p)-D_{\epsilon_n}f(p))$$ satisfies $\mathbb{E}Z_n(p_1)Z_n(p_2) \to 0$ as $n\to\infty$.
\end{theorem}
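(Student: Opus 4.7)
The strategy is to reduce $\mathbb{E}Z_n(p_1)Z_n(p_2)$ to a single-sample covariance of $V(p_i) = K\!\left(\frac{p_i - X}{\epsilon_n}\right)\nabla f(p_i)\cdot(X-p_i)$ and then to exploit the separation $p_1\neq p_2$ through an integral estimate on $K$. Following the proof of Theorem~\ref{thm2.4}, I would write $Z_n(p) = Z_{n,1}(p)+Z_{n,2}(p)$ via the first-order Taylor expansion of $f$ at $p$. Expanding $\mathbb{E}Z_n(p_1)Z_n(p_2)$ as a sum of four cross terms and applying Cauchy-Schwarz, the three terms containing any $Z_{n,2}$ factor are $O(\epsilon_n)$: Lemma~\ref{bound for Z_n,2} gives $\mathbb{E}Z_{n,2}(p_i)^2 = O(\epsilon_n^2)$, while Lemma~\ref{varZn1 expectationWnj} implies $\mathbb{E}Z_{n,1}(p_i)^2 = O(1)$ (its variance expression converges to \eqref{eqn:varfactor} as $\epsilon_n\to 0$, by continuity of $g$ at each $p_i$).

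Hence everything reduces to $\mathbb{E}Z_{n,1}(p_1)Z_{n,1}(p_2)$. By independence of the $X_j$ and centering, this equals $\epsilon_n^{-(d+2)}\operatorname{Cov}(V(p_1),V(p_2))$. A direct change of variables shows $\mathbb{E}V(p) = O(\epsilon_n^{d+1})$, so the product-of-means piece is $O(\epsilon_n^d)\to 0$ after division by $\epsilon_n^{d+2}$; the remaining task is
\begin{equation*}
 \frac{1}{\epsilon_n^{d+2}}\,\mathbb{E}V(p_1)V(p_2)\longrightarrow 0.
\end{equation*}
My key trick here is the AM-GM bound $\|y-p_1\|\,\|y-p_2\|\leq\tfrac12\bigl(\|y-p_1\|^2+\|y-p_2\|^2\bigr)$ applied inside the integral defining $\mathbb{E}V(p_1)V(p_2)$. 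This splits the estimate into two pieces, each carrying a quadratic factor centered at a \emph{single} point. Performing the change $y = p_i + \epsilon_n t$ in the $i$-th piece, the factor $\|y-p_i\|^2 = \epsilon_n^2\|t\|^2$ absorbs the $\epsilon_n^{d+2}$ normalization, reducing the problem to showing
\begin{equation*}
 \int |K(-t)|\,|K(D_n - t)|\,\|t\|^2\,dt \longrightarrow 0 \qquad\text{as }\|D_n\|\to\infty,
\end{equation*}
with $D_n = \pm(p_2-p_1)/\epsilon_n$.

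The main obstacle is this kernel-overlap estimate, precisely because hypothesis \eqref{eq-Kt4} provides no pointwise control on $K$; by Lemma~\ref{NewFiniteMoment} (and the elementary inequality $|K|^2\leq|K|+|K|^4$) it yields only $K\in L^2$ and $\int|K|^2\|t\|^4\,d\lambda<\infty$. I would prove the estimate by splitting at a radius $R$. On $\{\|t\|\leq R\}$, Cauchy-Schwarz together with the substitution $s = D_n - t$ bounds the piece by
\begin{equation*}
\Bigl(\!\int |K(-t)|^2\|t\|^4\,dt\Bigr)^{1/2}\Bigl(\!\int_{\|s\|\geq\|D_n\|-R}|K(s)|^2\,ds\Bigr)^{1/2},
\end{equation*}
whose second factor vanishes for fixed $R$ as $\|D_n\|\to\infty$ by dominated convergence. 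On $\{\|t\|>R\}$, Cauchy-Schwarz against $\|K\|_2$ gives a bound by $\|K\|_2\bigl(\int_{\|t\|>R}|K(-t)|^2\|t\|^4\,dt\bigr)^{1/2}$, whose first factor vanishes as $R\to\infty$ by dominated convergence since $\int|K|^2\|t\|^4\,d\lambda<\infty$. Given $\eta>0$, I choose $R$ so large that the second piece is $<\eta/2$, and then $\epsilon_n$ so small that the first is $<\eta/2$. This split-integration, leaning only on the $L^2$ moment bound rather than on pointwise decay of $K$, is the crux of the argument.
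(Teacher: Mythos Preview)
Your reduction matches the paper's proof exactly through the point where everything hinges on showing that
\[
\frac{1}{\epsilon^{d+2}}\int\Bigl|K\Bigl(\tfrac{p_1-y}{\epsilon}\Bigr)\Bigr|\,\Bigl|K\Bigl(\tfrac{p_2-y}{\epsilon}\Bigr)\Bigr|\,\|y-p_1\|\,\|y-p_2\|\,g(y)\,d\lambda(y)\longrightarrow 0.
\]
The difference is only in this final estimate, and your version has a gap. After the AM--GM step your Cauchy--Schwarz bound on $\{\|t\|>R\}$ produces the factor $\|K\|_{L^2}$, but hypothesis~\eqref{eq-Kt4} does \emph{not} force $K\in L^2$ when $d=1$. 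For instance, take $K(t)=|t|^{-1/2}\bigl|\log|t|\bigr|^{-1/2}$ on $|t|<\tfrac12$ and zero elsewhere: then $\int |t|\,|K|^4=\int_0^{1/2}t^{-1}(\log\tfrac1t)^{-2}\,dt<\infty$, so~\eqref{eq-Kt4} holds, yet $\int|K|^2=\int_0^{1/2}t^{-1}(\log\tfrac1t)^{-1}\,dt=\infty$. Your claim that Lemma~\ref{NewFiniteMoment} yields $K\in L^2$ therefore fails in this case, and the bound on the $\{\|t\|>R\}$ piece is vacuous.

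The remedy is to drop AM--GM entirely. Substituting $y=p_1+\epsilon t$ directly in the integral above gives, after bounding $g$, exactly the convolution $(\phi*\psi)\bigl(\tfrac{p_2-p_1}{\epsilon}\bigr)$ with $\phi(t)=|K(-t)|\,\|t\|$ and $\psi(t)=|K(t)|\,\|t\|$; both lie in $L^2$ by the same interpolation you used for $\int|K|^2\|t\|^4<\infty$ (replace the moment exponent $4$ by $2$). This is precisely what the paper does, and it then invokes the Riemann--Lebesgue lemma via $\widehat{\phi*\psi}=\hat\phi\,\hat\psi\in L^1$. Your truncation-at-radius-$R$ argument is in fact a perfectly good, and more elementary, substitute for Riemann--Lebesgue here---provided you apply it to the pair $(\phi,\psi)$ rather than to $(|K|\,\|t\|^2,\,|K|)$.
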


\begin{proof}
We write $Z_n(p)=Z_{n,1}(p)+Z_{n,2}(p)$ as in~\eqref{eqnZ_n,1} and~\eqref{eqnZ_n,2} so that
\begin{equation*}
\mathbb{E}Z_n(p_1)Z_n(p_2)=\sum_{1\leq i,j\leq 2}\mathbb{E}Z_{n,i}(p_1)Z_{n,j}(p_2).
    \end{equation*}
Using Lemma~\ref{bound for Z_n,2} and Theorem~\ref{thm2.4} we know 
    $\mathbb{E}Z_{n,1}^2(p)\to s^2(p)$ and $\mathbb{E}Z_{n,2}^2(p)\to 0$
so by Holder's inequality, we see $\mathbb{E}Z_{n,i}(p_1)Z_{n,j}(p_2)\to 0$ when $\{i,j\}\cap\{2\}\neq \emptyset$.
It then remains to show  $\mathbb{E}Z_{n,1}(p_1)Z_{n,1}(p_2) \to 0$.

Recall from~\eqref{eqnZ_n,1} that $Z_{n,1}$ is a sum of $n$ mean-zero terms from the i.i.d.\ random variables $Y_j$:
\begin{equation*}
Z_{n,1}(p)= \frac{1}{\sqrt{n\epsilon^{d+2}}}\sum_{j=1}^n\left(K\Bigl(\frac{-Y_j}{\epsilon}\Bigr)\nabla f(p)\cdot Y_j -\mathbb{E}K\Bigl(\frac{-Y_j}{\epsilon}\Bigr)\nabla f(p)\cdot Y_j\right).
\end{equation*}
In the expectation of the product $Z_{n,1}(p_1)Z_{n,1}(p_2)$ all terms containing $Y_i$ and $Y_j$ with $i\neq j$ will therefore vanish.  The remaining $n$ terms  come from the case $i=j$. Using the fact that all $Y_i$ have the distribution of $Y=X-p$ we compute
\begin{align}
    \lefteqn{\mathbb{E}Z_{n,1}(p_1)Z_{n,1}(p_2)}\quad&\notag\\
    &=\frac{1}{\epsilon^{d+2}}\mathbb{E}K(\frac{p_1-X}{\epsilon})\nabla f(p_1)\cdot (X-p_1)K(\frac{p_2-X}{\epsilon})\nabla f(p_2)\cdot(X-p_2) \notag\\
    &-\frac{1}{\epsilon^{d+2}}\mathbb{E}K(\frac{p_2-X}{\epsilon})\nabla f(p_2)\cdot (X-p_2)\mathbb{E}K(\frac{p_1-X}{\epsilon})\nabla f(p_1)\cdot (X-p_1). \label{eqn:correlationthm1}
    \end{align}
The factors in the second term of~\eqref{eqn:correlationthm1} are bounded as follows:
    \begin{align*}
\lefteqn{ \mathbb{E}\Bigl| K(\frac{p_1-X}{\epsilon})\Bigr| \norm{\nabla f(p_1)}\norm{X-p_1} }\quad &\\
    &= \norm{\nabla f(p_1)} \int\Bigl| K(\frac{p_1-y}{\epsilon})\Bigr| \norm{y-p_1}g(y)\,d\lambda(y)\\
    &= \norm{\nabla f(p_1)} \int\abs{K(-t)}\norm{\epsilon t}g(p_1+\epsilon t)\epsilon^d\,d\lambda(t)\\
    &\leq \|g\|_\infty \norm{\nabla f(p_1)}\epsilon^{d+1}  \int\abs{K(-t)}\norm{t}\,d\lambda(t).
    \end{align*}
so that the whole second term in~\eqref{eqn:correlationthm1} is bounded by
\begin{multline*}
\frac{1}{\epsilon^{d+2}}\mathbb{E}\Bigl| K(\frac{p_2-X}{\epsilon})\Bigr| \norm{\nabla f(p_2)}\norm{X-p_2}\mathbb{E}\Bigl| K(\frac{p_1-X}{\epsilon})\Bigr| \norm{\nabla f(p_1)}\norm{X-p_1} \\
\leq \|g\|_\infty^2 \norm{\nabla f(p_2)}\norm{\nabla f(p_1)}\epsilon^d \Bigl( \int\abs{K(-t)}\norm{t}\,d\lambda(t)\Bigr)^2
\end{multline*}
and converges to $0$ as $\epsilon\to0$.

Now consider the first term of~\eqref{eqn:correlationthm1}. We have
    \begin{align*}
        \lefteqn{\frac{1}{\epsilon^{d+2}}\mathbb{E}K(\frac{p_1-X}{\epsilon})\nabla f(p_1)\cdot (X-p_1)K(\frac{p_2-X}{\epsilon})\nabla f(p_2)\cdot(X-p_2)}\quad&\\
        =&\frac{1}{\epsilon^{d+2}}\int K(\frac{p_1-y}{\epsilon})\nabla f(p_1)\cdot (y-p_1)K(\frac{p_2-y}{\epsilon})\nabla f(p_2)\cdot(y-p_2)g(y)\,d\lambda(y)\\
        \leq& \frac{\|g\|_\infty\|\nabla f(p_1)\|\|\nabla f(p_2)\|}{\epsilon^{d+2}}\int\abs{K(\frac{p_1-y}{\epsilon})}\norm{y-p_1}\abs{K(\frac{p_2-y}{\epsilon})}\norm{y-p_2}\,d\lambda(y)\\ 
	&=\|g\|_\infty\|\nabla f(p_1)\|\|\nabla f(p_2)\| \int |K(-t)|\|t\| \Bigl| K\bigl(\frac{p_2-p_1}{\epsilon} -t\bigr)\Bigr| \Bigl\|\frac{p_2-p_1}{\epsilon} -t\Bigr\| d\lambda(t).
        \end{align*}
This last integral is the convolution $\phi\ast\psi\bigl(\frac{p_2-p_1}{\epsilon}\bigr)$, where $\phi(t)=|K(-t)|\|t\|$ and $\psi(t)=|K(t)|\|t\|$.  Our assumption on the moments of $K$ ensures $\phi$ and $\psi$ are in $L^2(d\lambda)$; by Parseval's identity so are their Fourier transforms $\hat\phi$ and $\hat\psi$, and by the Cauchy-Schwarz inequality then $\hat\phi\hat\psi\in L^1(d\lambda)$.  Using the inverse Fourier transform we can write $\phi\ast\psi\bigl(\frac{p_2-p_1}{\epsilon}\bigr)=(\hat\phi\hat\psi)^\vee\bigl(\frac{p_1-p_2}\epsilon\bigr)$.  Since $p_1\neq p_2$ we have $\bigl\|\frac{p_2-p_1}\epsilon\bigr\|\to\infty$ as $\epsilon\to0$, so applying the Riemann-Lebesgue lemma. we find $\phi\ast\psi\bigl(\frac{p_2-p_1}{\epsilon}\bigr)\to0$ as $\epsilon\to0$.
\end{proof}

\section{Sample points on subsets of $\mathbb{R}^d$}\label{sec:bdy}
Thus far we have assumed that our points $X_j$ are chosen according to a  smooth density $g$ on $\mathbb{R}^d$. However, in many problems it is more natural that the sample points lie on some proper subset $S\subset\mathbb{R}^d$, and the corresponding density $g$ might then be expected to be discontinuous at points of $\partial S$.  In this section we explore the limiting behavior of averaging kernel operators for points with distributions of this type and show that if $p$ is an interior point of $S$ the results are much like those in the previous sections but with a differential operator that depends on the asymptotic structure of $S$ at $p$, while for $p\in\partial S$ it is possible to make smoothness conditions on $\partial S$ that permit characterizing limits of averaging kernels in a similar manner.

\begin{definition}\label{defn:Ap}
  If    $S\subset  \mathbb{R}^d$ and $p \in S$, 
 define 
    \begin{equation}\label{halfspace}
    \mathbb{A}(p)=\lim_{\epsilon\to0}\frac{S-p}{\epsilon}, 
    \end{equation}
    if the limit exists in the sense of  pointwise convergence a.e.\ of the indicator functions.
\end{definition}
\begin{remark}
It is elementary that  $\mathbb{A}(p)$ is a cone with vertex at the origin. The fact that it is defined only up to sets of zero $\lambda$-measure may be interpreted as saying that the set of rays in $\mathbb{A}(p)$ is determined up to a set of zero $(d-1)$-dimensional measure in the unit sphere. 
\end{remark}

We then modify Definition~\ref{delta} of $\Delta_K$ as follows. 
\begin{definition}    \label{delta_A(p)}
Assume $\int_{\mathbb{R}^d}K(t)\norm{t}^2\,d\lambda(t)<\infty$ and
    let $S\subset \mathbb{R}^d$. Suppose $p\in S$ is such that~\eqref{halfspace} exists.
    Define the operator $\Delta_{K,\mathbb{A}(p)}$ on the class of $C^2$ functions on $S$ by,
     \begin{align}   
    \Delta_{K,\mathbb{A}(p)} f(p)
	&= \sum_{i,j=1}^d\frac{\partial f}{\partial x_i}(p)\frac{\partial g}{\partial x_j}(p)\int_{\mathbb{A}(p)}K(-t)t_it_jd\lambda(t)\\ 
    &\quad + \frac{g(p)}{2}\sum_{i,j=1}^d\frac{\partial^2f}{\partial x_i \partial x_j}(p)\int_{\mathbb{A}(p)}K(-t)t_it_jd\lambda(t)\nonumber
    \end{align}
\end{definition}

The following observation will be useful.
\begin{lemma}\label{lem:gradientconditindepofrho}
Suppose a kernel $K$ satisfies $K(-t)\|t\|^2\in L^1(d\lambda(t))$.  If, for some vector $v$, the following limit exists then it is independent of $\rho\in(0,\infty]$:
\begin{equation}
    \lim_{\epsilon\to0^+} \frac{1}{\epsilon}\int_{\frac{S-p}{\epsilon}\cap B(0,\frac{\rho}{\epsilon})}K(-t) v\cdot t\,d\lambda(t).
    \end{equation}
The same is true if we replace the integration region by $\mathbb{A}(p)\cap B\bigl(0,\frac\rho\epsilon\bigr)$. 
\end{lemma}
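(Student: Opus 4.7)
My plan is to show that for any two choices $0<\rho_1<\rho_2\leq\infty$ the difference of the corresponding integrals tends to zero as $\epsilon\to0^+$; once this is established, existence of the limit for one value of $\rho$ forces the same value for every other. Concretely, writing $E_\epsilon(\rho)$ for the integral in the statement, I want to bound
\begin{equation*}
\bigl|E_\epsilon(\rho_2)-E_\epsilon(\rho_1)\bigr|
=\frac1\epsilon\Bigl|\int_{\frac{S-p}{\epsilon}\cap\bigl(B(0,\rho_2/\epsilon)\setminus B(0,\rho_1/\epsilon)\bigr)}K(-t)\,v\cdot t\,d\lambda(t)\Bigr|
\end{equation*}
and show it vanishes; the case $\rho_2=\infty$ is interpreted as omitting the outer ball.

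The key trick is to use the geometry of the annulus to trade a factor of $\|t\|$ for an $\epsilon$. On the set $\{t:\|t\|\geq\rho_1/\epsilon\}$ we have $\|t\|\leq \epsilon\|t\|^2/\rho_1$, so
\begin{equation*}
|K(-t)\,v\cdot t|\leq \|v\|\,|K(-t)|\,\|t\|\leq \frac{\|v\|\,\epsilon}{\rho_1}\,|K(-t)|\,\|t\|^2.
\end{equation*}
Plugging this into the bound above the $1/\epsilon$ prefactor is absorbed and one obtains
\begin{equation*}
\bigl|E_\epsilon(\rho_2)-E_\epsilon(\rho_1)\bigr|
\leq \frac{\|v\|}{\rho_1}\int_{\{\|t\|\geq \rho_1/\epsilon\}}|K(-t)|\,\|t\|^2\,d\lambda(t).
\end{equation*}

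The hypothesis $K(-t)\|t\|^2\in L^1(d\lambda)$ (absolute integrability) then forces the tail integral on the right-hand side to tend to zero as $\epsilon\to0^+$ by dominated convergence, which completes the argument. For the statement about $\mathbb{A}(p)\cap B(0,\rho/\epsilon)$ the proof is identical, since only the bound $\|t\|\leq\epsilon\|t\|^2/\rho_1$ on the annular region and the integrability of $|K(-t)|\|t\|^2$ were used; the specific shape of the integration domain plays no role. I do not anticipate a genuine obstacle here; the only point that requires a bit of care is handling the case $\rho_2=\infty$ uniformly with the finite case, but the same annulus estimate covers both once one reads $B(0,\infty/\epsilon)$ as $\mathbb{R}^d$.
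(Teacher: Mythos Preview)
Your proof is correct and follows essentially the same approach as the paper: both compare two values $0<\rho_1<\rho_2$, use the inequality $\|t\|\leq \epsilon\|t\|^2/\rho_1$ on the annular region $\{\|t\|\geq\rho_1/\epsilon\}$ to absorb the $1/\epsilon$ prefactor, and then invoke the integrability of $|K(-t)|\,\|t\|^2$ together with dominated convergence to show the tail vanishes. Your treatment is in fact slightly more explicit about the case $\rho_2=\infty$, which the paper leaves implicit.
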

\begin{proof}
For $0<\rho_1<\rho_2$ we have, as $\epsilon\to0$,
\begin{align*}
\lefteqn{\abs{\frac{1}{\epsilon}\int_{\frac{S-p}{\epsilon}\cap(B(0,\frac{\rho_2}\epsilon)\setminus B(0,\frac{\rho_1}{\epsilon}))} K(-t) v\cdot t\,  d\lambda(t)}} \quad&\\
&\leq \frac{\|v\|}{\rho_1} \int |K(-t)|\|t\| \,\frac{\rho_1}{\epsilon}\, \mathbbm{1}_{\frac{S-p}{\epsilon}\setminus B(0,\frac{\rho_1}{\epsilon})}d\lambda(t)\notag\\
&\leq\frac{\|v\|}{\rho_1} \int\abs{K(-t)}\norm{t}^2\mathbbm{1}_{\frac{S-p}{\epsilon}\setminus B(0,\frac{\rho_1}{\epsilon})}d\lambda(t)
\to 0
\end{align*}
because the integrand is dominated by $\abs{K(-t)}\norm{t}^2\in L^1(d\lambda)$ and converges pointwise to zero.  The proof for integration over $\mathbb{A}(p)\cap B\bigl(0,\frac\rho\epsilon\bigr)$ is the same. 
\end{proof}

Our first result is analogous to  Theorem~\ref{thm0.4}.
\begin{theorem}\label{main_thm_for_domains}
Let $S\subset  \mathbb{R}^d$ and $ p\in S$ be such that~\eqref{halfspace} exists, and $X$ be an $S$-valued random variable with density $g$ that has bounded $C^1$ norm.
Suppose the kernel $K$ satisfies $\int|K(t)|\|t\|^2d\lambda(t)<\infty$ and 
    \begin{equation}
    \nabla f(p)\cdot \Bigl( \frac{1}{\epsilon}\int_{\frac{S-p}{\epsilon}\cap B(0,\frac{\rho}{\epsilon})}K(-t) t\,d\lambda(t) \Bigr)\to 0 \label{gradientCondition}
    \end{equation}
for some and hence all $\rho>0$.
If  $f:S\to\mathbb{R}$ has bounded $C^2$ norm then, as $\epsilon\to0$,
\begin{equation*}
	  D_\epsilon f(p)-\Delta_{K,\mathbb{A}(p)}f(p) \to 0.
	\end{equation*}
\end{theorem}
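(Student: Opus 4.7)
The plan is to follow the strategy of Theorem~\ref{thm0.4} closely, with the principal modification that after rescaling, all integrals run over the evolving domain $(S-p)/\epsilon$, whose indicator converges pointwise a.e.\ to $\mathbbm{1}_{\mathbb{A}(p)}$ by Definition~\ref{defn:Ap}. The convergences that were handled by direct computation in Theorem~\ref{thm0.4} are here handled by dominated convergence applied to these indicator-restricted integrals.

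First I would write $D_\epsilon f(p) = \frac{1}{\epsilon^{d+2}}\int_S K(\frac{p-y}{\epsilon})(f(y)-f(p))g(y)\,d\lambda(y)$, substitute $t=(y-p)/\epsilon$, and apply the first-order Taylor expansion of $f$ with Lagrange remainder $R_{p,1}(\epsilon t)=\tfrac12(\epsilon t)^T Hf(p+r\epsilon t)(\epsilon t)$ to decompose $D_\epsilon f(p)=I_1(\epsilon)+I_2(\epsilon)$, where
\begin{align*}
I_1(\epsilon)&=\frac{1}{\epsilon}\int_{(S-p)/\epsilon}K(-t)\,\nabla f(p)\cdot t\, g(p+\epsilon t)\,d\lambda(t),\\
I_2(\epsilon)&=\frac{1}{2}\int_{(S-p)/\epsilon}K(-t)\, t^T Hf(p+r\epsilon t)t\, g(p+\epsilon t)\,d\lambda(t).
\end{align*}
In $I_1(\epsilon)$, I would expand $g(p+\epsilon t)=g(p)+\epsilon\nabla g(p+c\epsilon t)\cdot t$ and split. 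The resulting term proportional to $g(p)/\epsilon$ is exactly $g(p)\,\nabla f(p)\cdot\bigl(\tfrac{1}{\epsilon}\int_{(S-p)/\epsilon}K(-t)t\,d\lambda(t)\bigr)$, which converges to $0$ by the cancellation hypothesis~\eqref{gradientCondition} applied with $\rho=\infty$, permitted by Lemma~\ref{lem:gradientconditindepofrho}. The remaining piece of $I_1(\epsilon)$ is handled by dominated convergence with dominating function $\|\nabla f(p)\|\|\nabla g\|_\infty|K(-t)|\|t\|^2\in L^1(d\lambda)$ and pointwise a.e.\ limit $\mathbbm{1}_{\mathbb{A}(p)}(t)K(-t)(\nabla f(p)\cdot t)(\nabla g(p)\cdot t)$, giving $\sum_{i,j}\tfrac{\partial f}{\partial x_i}(p)\tfrac{\partial g}{\partial x_j}(p)\int_{\mathbb{A}(p)}K(-t)t_it_j\,d\lambda(t)$.

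The integral $I_2(\epsilon)$ is treated analogously by dominated convergence with dominating function $\tfrac12\|Hf\|_\infty\|g\|_\infty|K(-t)|\|t\|^2$ and pointwise a.e.\ limit $\tfrac{g(p)}{2}\mathbbm{1}_{\mathbb{A}(p)}(t)K(-t)t^T Hf(p)t$, yielding $\tfrac{g(p)}{2}\sum_{i,j}\tfrac{\partial^2 f}{\partial x_i\partial x_j}(p)\int_{\mathbb{A}(p)}K(-t)t_it_j\,d\lambda(t)$. Summing the limits of $I_1$ and $I_2$ reproduces $\Delta_{K,\mathbb{A}(p)}f(p)$ exactly. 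The main obstacle is controlling the $1/\epsilon$ prefactor in $I_1(\epsilon)$, which would blow up were it not for the cancellation assumption; the essential role of Lemma~\ref{lem:gradientconditindepofrho} is to convert the ball-restricted form of~\eqref{gradientCondition} into the full-domain cancellation that appears naturally here. Beyond that, the argument reduces to two routine applications of dominated convergence, in which the pointwise a.e.\ convergence of $\mathbbm{1}_{(S-p)/\epsilon}$ to $\mathbbm{1}_{\mathbb{A}(p)}$ from Definition~\ref{defn:Ap} replaces the direct manipulation of the full space available in Theorem~\ref{thm0.4}.
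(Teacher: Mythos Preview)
Your proof is correct and follows essentially the same approach as the paper's. The only organizational difference is that the paper first truncates the integration region to $\frac{S-p}{\epsilon}\cap B(0,\frac{\rho}{\epsilon})$ and shows the tail is $o(1)$ before applying Taylor expansions and dominated convergence, whereas you work directly on all of $(S-p)/\epsilon$ and invoke the $\rho=\infty$ case of Lemma~\ref{lem:gradientconditindepofrho} for the gradient cancellation term; these are equivalent ways to arrange the same argument.
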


\begin{proof}
Fixing $\rho$ such that $Hf,\nabla g$ are continuous in $\overline{B(p,\rho)}$ we compute that
\begin{align*}
    D_\epsilon f(p)&=\frac{1}{\epsilon^{d+2}}\mathbb{E}K(\frac{p-X}{\epsilon})(f(X)-f(p)) \notag\\
    &=\frac{1}{\epsilon^{d+2}}\int_SK(\frac{p-y}{\epsilon})(f(y)-f(p))g(y)d\lambda(y) \notag\\
    &=\frac{1}{\epsilon^{d+2}}\int_{\frac{S-p}{\epsilon}}K(-t)(f(p+\epsilon t)-f(p))g(p+\epsilon t)\epsilon^d d\lambda(t).
\end{align*}
However, as $\epsilon\to0$ we have
\begin{align*}
\lefteqn{\abs{\frac{1}{\epsilon^2}\int_{\frac{S-p}{\epsilon}\setminus B(0,\frac{\rho}{\epsilon})}
K(-t)(f(p+\epsilon t)-f(p))g(p+\epsilon t)d\lambda(t)}}\quad&\notag\\
&\leq \frac{2}{\rho^2}\|f\|_\infty\|g\|_\infty \int \abs{K(-t)}\frac{\rho^2}{\epsilon^2}\mathbbm{1}_{\frac{S-p}{\epsilon}\setminus B(0,\frac{\rho}{\epsilon})}d\lambda(t)\notag\\
&\leq\frac{2}{\rho^2} \|f\|_\infty\|g\|_\infty \int\abs{K(-t)}\norm{t}^2\mathbbm{1}_{\frac{S-p}{\epsilon}\setminus B(0,\frac{\rho}{\epsilon})}d\lambda(t)
\to 0,
\end{align*}
so that, as $\epsilon\to0$,
\begin{equation}
D_\epsilon f(p)
=\frac{1}{\epsilon^{2}}\int_{\frac{S-p}{\epsilon}\cap B(0,\frac\rho\epsilon)}K(-t)(f(p+\epsilon t)-f(p))g(p+\epsilon t)  d\lambda(t) + o(1)\label{eqn:main_thm_for_domains1}
\end{equation}

Using the second order Taylor series for $f$ and the first order Taylor series for $g$ we find there are  $r_t=p+r\epsilon t$ for some $r=r(p,\epsilon,t)\in(0,1)$ and $s_t=p+s\epsilon t$ for some $s=s(p,\epsilon,t)\in(0,1)$ so that
\begin{align*}&
{(f(p+\epsilon t)-f(p))g(p+\epsilon t)}  = \bigl( \nabla f(p)\cdot \epsilon t +\frac12\epsilon^2 t^T\cdot Hf(r_t)\cdot t\bigr)g(p+\epsilon t)\\
&= \epsilon g(p) \nabla f(p)\cdot t + \epsilon^2 \bigl( \nabla f(p)\cdot  t \bigr)\bigl(\nabla g(s_t)\cdot t\bigr) + \frac12 \epsilon^2 \bigl( t^T\cdot Hf(r_t)\cdot t\bigr)g(p+\epsilon t).
\end{align*}

We substitute these into~\eqref{eqn:main_thm_for_domains1} to obtain three integrals.  If we then subtract $\Delta_{K,\mathbb{A}(p)}f(p)$ we may write $D_\epsilon- \Delta_{K,\mathbb{A}(p)}=B_1+B_2+B_3+o(1)$, each of which will be seen to converge to zero as $\epsilon\to0$.  The first is
\begin{equation*}
B_1=\frac{g(p)}{\epsilon}\int_{\frac{S-p}{\epsilon}\cap B(0,\frac\rho\epsilon)}K(-t)\nabla f(p)\cdot t d\lambda(t),
\end{equation*} 
which converges to zero by~\eqref{gradientCondition}.
Writing $S_{p,\rho,\epsilon}=\frac{S-p}{\epsilon}\cap B(0,\frac\rho\epsilon)$, we have for the second term
\begin{align*}
|B_2|&=\frac{1}{2} \biggl| \int_{S_{p,\rho,\epsilon}} K(-t)\bigl( t^THf(r_t)t\bigr) g(p+\epsilon t) d\lambda -\int_{\mathbb{A}(p)}K(-t)\bigl( t^THf(p)t\bigr) g(p) d\lambda\biggr|\\
&\leq \frac{1}{2}\int \abs{K(-t)}\norm{t}^2\norm{Hf(r_t)g(p+\epsilon t)\mathbbm{1}_{S_{p,\rho,\epsilon}}-Hf(p)g(p)\mathbbm{1}_{\mathbb{A}(p)}}_{\text{op}}d\lambda(t),
\end{align*}
while the third term satisfies
\begin{align*}
|B_3|&=\int_{S_{p,\rho,\epsilon}} K(-t)\nabla f(p)\cdot t \nabla g(s_t)\cdot t d\lambda-\int_{\mathbb{A}(p)}K(-t)\nabla f(p)\cdot t \nabla g(p) \cdot t d\lambda\\
&\leq \norm{\nabla f(p)}\int \abs{K(-t)}\norm{t}^2\abs{\nabla g(s_t)\mathbbm{1}_{S_{p,\rho,\epsilon}}-\nabla g(p)\mathbbm{1}_{\mathbb{A}(p)}}d\lambda(t).
\end{align*}
Then the integrands for both $B_2$ and $B_3$ converge to zero $\lambda$-a.e.\ as $\epsilon\to0$ using continuity of $Hf$, $g$ and $\nabla g$ in $\overline{B(p,\rho)}$ and the fact that $\mathbbm{1}_{S_{p,\rho,\epsilon}}\to\mathbbm{1}_{\mathbb{A}(p)}$ a.e.\  as $\epsilon\to0$.  Moreover, the integrands are dominated by $M\abs{K(-t)}\norm{t}^2\in L^1(d\lambda)$ for some $M$ because $f$ has bounded  $C^2$ norm and $g$ has bounded $C^1$ norm. The conclusion of the theorem then follows by dominated convergence. 
\end{proof}

Since $\frac{S-p}\epsilon$ has limit $\mathbb{A}(p)$ in the sense of Definition~\ref{defn:Ap}  it is natural to wonder whether we can replace the former with the latter in the condition~\eqref{gradientCondition}.   A simple case where we can do so is when $p$ is interior to $S$ so $\mathbb{A}(p)=\mathbb{R}^d$.

\begin{cor}
\label{Cor2.1}
Let $p$ be an interior point of $S\subset  \mathbb{R}^d$. The conclusion of Theorem~\ref{main_thm_for_domains} remains valid if we replace the condition~\eqref{gradientCondition} with
\begin{equation*}
    \nabla f(p)\cdot\Bigl( \int_{\mathbb{R}^d}K(t)\cdot t\,d\lambda(t)\Bigr)=0.
\end{equation*}
 \end{cor}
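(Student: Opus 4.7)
The plan is to reduce the new hypothesis back to the original condition~\eqref{gradientCondition} of Theorem~\ref{main_thm_for_domains} by exploiting that $p$ is interior. Since $p \in \overset{\circ}{S}$, there exists $r>0$ with $B(p,r) \subset S$; in particular $\mathbb{A}(p) = \mathbb{R}^d$, so $\Delta_{K,\mathbb{A}(p)}$ coincides with the operator $\Delta_K$ of Definition~\ref{delta}. By Lemma~\ref{lem:gradientconditindepofrho} the limit in \eqref{gradientCondition}, when it exists, does not depend on $\rho$, so it suffices to verify it for one value; I pick any $\rho \in (0,r]$. For such $\rho$ and every $\epsilon>0$ one has $B(0,\rho/\epsilon) \subset \frac{S-p}{\epsilon}$, which collapses $\frac{S-p}{\epsilon} \cap B(0,\rho/\epsilon)$ to the full ball $B(0,\rho/\epsilon)$.

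The next step is to control $\frac{1}{\epsilon}\int_{B(0,\rho/\epsilon)} K(-t)\,t\,d\lambda(t)$ using the new hypothesis, which after the substitution $t \mapsto -t$ reads $\nabla f(p)\cdot \int_{\mathbb{R}^d} K(-t)\,t\,d\lambda(t)=0$. Writing
\begin{equation*}
\int_{B(0,R)} K(-t)\,t\,d\lambda(t) = \int_{\mathbb{R}^d} K(-t)\,t\,d\lambda(t) - \int_{\|t\|>R} K(-t)\,t\,d\lambda(t),
\end{equation*}
the first term vanishes after dotting with $\nabla f(p)$. For the tail I use the integrability hypothesis $\int |K(t)|\,\|t\|^2\,d\lambda(t) < \infty$ inherited from Theorem~\ref{main_thm_for_domains}, which yields the tail bound
\begin{equation*}
\int_{\|t\|>R} |K(-t)|\,\|t\|\,d\lambda(t) \leq \frac{1}{R}\int_{\|t\|>R} |K(-t)|\,\|t\|^2\,d\lambda(t) = o(1/R)
\end{equation*}
as $R\to\infty$, since the right-hand integral is the tail of a convergent integral.

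Setting $R = \rho/\epsilon$, the two displays combine to give
\begin{equation*}
\Bigl|\nabla f(p)\cdot \frac{1}{\epsilon}\int_{B(0,\rho/\epsilon)} K(-t)\,t\,d\lambda(t)\Bigr|
= \frac{1}{\epsilon}\,o(\epsilon/\rho) = o(1) \quad \text{as } \epsilon \to 0.
\end{equation*}
This verifies condition~\eqref{gradientCondition}, so Theorem~\ref{main_thm_for_domains} applies and yields $D_\epsilon f(p) - \Delta_{K,\mathbb{A}(p)} f(p) \to 0$. The only delicate point I foresee is ensuring that the new hypothesis is meaningful as a genuine Lebesgue integral; the moment bound $\int |K|\,\|t\|^2\,d\lambda < \infty$ controls the integrand at infinity but leaves open possible non-integrability of $K(t)\,t$ near the origin. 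In writing the proof I would either note that the hypothesis implicitly posits finiteness of $\int K(t)\,t\,d\lambda$ or replace the argument around $t=0$ by splitting the integral at $\|t\|=1$ and treating the bounded part directly, which does not change the asymptotics as $\epsilon\to 0$.
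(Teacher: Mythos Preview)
Your proof is correct and follows essentially the same route as the paper's: choose $\rho$ small enough that $B(p,\rho)\subset S$ so the intersection collapses to the ball, then verify \eqref{gradientCondition} by comparing $\int_{B(0,\rho/\epsilon)}$ with $\int_{\mathbb{R}^d}$ and showing the tail is $o(\epsilon)$. The paper packages that tail estimate by invoking Lemma~\ref{lem:gradientconditindepofrho} at $\rho=\infty$ (where the integral is identically zero by hypothesis) and transferring to finite $\rho$, whereas you reprove that special case of the lemma by hand; the content is the same.

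Your closing remark about integrability of $K(t)t$ near the origin is fair, but note that the paper faces the identical issue in Theorems~\ref{thm0.4} and~\ref{thm0.5} and simply treats the stated cancellation hypothesis as implicitly including finiteness of $\int K(-t)t\,d\lambda$, so no additional caveat is needed here.
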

\begin{proof}
Choose $\rho >0$ so $B(p,\rho)\subset S$ and therefore $\frac{S-p}\epsilon \cap B\bigl(0,\frac\rho\epsilon\bigr)=B\bigl(0,\frac\rho\epsilon\bigr)$.  The hypothesis gives
\begin{equation*}
	\lim_{\epsilon\to0^+} \frac{1}{\epsilon}\int_{B(0,\frac{\rho}{\epsilon})}K(-t)\nabla f(p) \cdot t d\lambda(t) =0
\end{equation*}
in the case $\rho=\infty$, so by Lemma~\ref{lem:gradientconditindepofrho} we have~\eqref{gradientCondition} and the result follows.
\end{proof}

\begin{remark}
Since Corollary~\ref{Cor2.1} is the case $\mathbb{A}(p)=\mathbb{R}^d$ and $\Delta_{K,\mathbb{A}(p)}f(p)=\Delta_Kf(p)$, it extends Theorem~\ref{thm0.4}. This is not suprising because the result is local.
\end{remark}

Turning to the question of whether we can replace $\frac{S-p}\epsilon$ with $\mathbb{A}(p)$ in condition~\eqref{gradientCondition} when $p\in\partial S$, we observe from Lemma~\ref{lem:gradientconditindepofrho} that this is a matter of estimating
\begin{equation*}
\frac1\epsilon \int_{B(0,\frac\rho\epsilon)} K(-t) \nabla f(p)\cdot t \bigl( \mathbbm{1}_{\frac{S-p}\epsilon}-\mathbbm{1}_{\mathbb{A}(p)}\bigr) d\lambda(t).
\end{equation*}
It is convenient to write this in polar coordinates $t=(r,\sigma)$ with $d\sigma$ the normalized measure on the unit sphere $\mathbb{S}$. Since $\mathbb{A}(p)$ is a cone its characteristic function can be written as a function of $\sigma$, and we also use $\mathbbm{1}_{\frac{S-p}\epsilon}(r\sigma)=\mathbbm{1}_{\frac{S-p}{r\epsilon}}(\sigma)$. Our expression becomes
\begin{equation}\label{eqn:radialgradest}
\int_0^{\frac\rho\epsilon} r^d \Bigl( \frac1\epsilon \int_{\mathbb{S}} K(-r\sigma) v\cdot\sigma  \bigl( \mathbbm{1}_{\frac{S-p}{r\epsilon}}(\sigma)-\mathbbm{1}_{\mathbb{A}(p)}(\sigma)\bigr) d\sigma \Bigr) dr.
\end{equation}
Observe that $r\leq\frac\rho\epsilon$ so $r\epsilon\leq \rho$ and we can assume $\rho$ is small. Moreover,  $\frac{S-p}{r\epsilon}\to\mathbb{A}(p)$ a.e.\ on the finite measure space $(\mathbb{S},d\sigma)$ so the set where this function is non-zero has small measure when $\rho$ is small.  However, this cannot be used to gain a factor of $\epsilon$, so any positive result must either require greater regularity of $\partial S$ at $p$ or that $\partial S$ is very thin at $p$, such as occurs at a cusp.

What is more, even knowing $\mathbbm{1}_{\frac{S-p}{r\epsilon}}-\mathbbm{1}_{\mathbb{A}(p)}$ is only non-zero on a small set is not sufficient for a result unless we also know $K$ cannot be too large on this set.  In all of the following results we achieve this by, in essence, assuming $K(r\sigma)$ can be replaced with $\sup_\sigma |K(r\sigma)|$ without affecting its integrability. 

\begin{cor}
\label{Cor 2.2}
Let $S\subset\mathbb{R}^d$ have $C^2$ boundary in a neighborhood of $p\in\partial S$ and suppose $\partial S$ has zero principal curvatures at $p$.  Further assume that the kernel $K$ satisfies  $\abs{K(t)}\leq {h(\|t\|)}$ for some  $h(r)$ with $\int r^{d+1}h(r)dr<\infty$.
Then the conclusion of Theorem~\ref{main_thm_for_domains} remains valid if the condition~\eqref{gradientCondition} is replaced with 
 \begin{equation}\label{eqn:Apcancellation}
       \nabla f(p)\cdot\Bigl( \int_{\mathbb{A}(p)}K(-t) t d\lambda(t)\Bigr)=0.
 \end{equation}
\end{cor}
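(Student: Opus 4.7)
The plan is to verify that hypothesis~\eqref{gradientCondition} of Theorem~\ref{main_thm_for_domains} follows from~\eqref{eqn:Apcancellation} under the present geometric and kernel assumptions; the conclusion is then immediate from that theorem. First I would apply Lemma~\ref{lem:gradientconditindepofrho} to the cone $\mathbb{A}(p)$: since~\eqref{eqn:Apcancellation} is the $\rho=\infty$ instance of the limit over $\mathbb{A}(p)$, the lemma produces the same limit for every finite $\rho>0$. Thus it suffices to establish that the error
\begin{equation*}
E_\epsilon:=\frac{1}{\epsilon}\int_{B(0,\rho/\epsilon)}K(-t)\,\nabla f(p)\cdot t\,\bigl(\mathbbm{1}_{(S-p)/\epsilon}(t)-\mathbbm{1}_{\mathbb{A}(p)}(t)\bigr)\,d\lambda(t)
\end{equation*}
tends to zero as $\epsilon\to 0$ for some sufficiently small $\rho>0$.

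Next I would exploit the $C^2$ structure of $\partial S$ at $p$. In local coordinates $(x,y)\in\mathbb{R}^{d-1}\times\mathbb{R}$ centered at $p$ for which the tangent hyperplane is $\{y=0\}$ and $\mathbb{A}(p)=\{y\le 0\}$, represent $\partial S$ near $p$ as the graph $y=\phi(x)$ with $\phi(0)=0$, $\nabla\phi(0)=0$, and $D^2\phi(0)=0$ (by the vanishing principal curvatures). Taylor's theorem combined with continuity of $D^2\phi$ at $0$ produces a nondecreasing modulus $\omega$ with $\omega(\delta)\to 0$ as $\delta\to 0^+$ such that $|\phi(x)|\le \omega(\|x\|)\|x\|^2$ on a neighborhood of the origin.

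Then I would pass to polar coordinates $t=r\sigma$, $\sigma=(\sigma_x,\sigma_y)\in\mathbb{S}^{d-1}$, observing that $t\in(S-p)/\epsilon$ iff $\sigma_y\le \phi(r\epsilon\sigma_x)/(r\epsilon)$ while $t\in\mathbb{A}(p)$ iff $\sigma_y\le 0$, so the symmetric difference on the unit sphere is contained in the equatorial band $\{|\sigma_y|\le \omega(r\epsilon)\,r\epsilon\}$ of spherical measure $O(\omega(r\epsilon)\,r\epsilon)$. Combining this with $|K(-t)|\le h(r)$, $|\nabla f(p)\cdot t|\le\|\nabla f(p)\|\,r$, and the polar Jacobian $r^{d-1}$ gives
\begin{equation*}
|E_\epsilon|\le \frac{C\|\nabla f(p)\|}{\epsilon}\int_0^{\rho/\epsilon}r^{d-1}\cdot r\cdot h(r)\cdot \omega(r\epsilon)\,r\epsilon\,dr=C\|\nabla f(p)\|\int_0^{\rho/\epsilon}r^{d+1}h(r)\omega(r\epsilon)\,dr.
\end{equation*}
Choosing $\rho$ small enough that $\omega$ is bounded on $[0,\rho]$ and splitting the integral at a fixed $R$, the tail $\int_R^\infty r^{d+1}h(r)\,dr$ can be made arbitrarily small by the moment hypothesis, while for any fixed $R$ the head satisfies $\omega(R\epsilon)\int_0^R r^{d+1}h(r)\,dr\to 0$ as $\epsilon\to 0$; a standard $\eta/2,\eta/2$ argument then yields $E_\epsilon\to 0$.

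The main obstacle is the spherical measure estimate for the symmetric difference, which is precisely where the vanishing-principal-curvatures hypothesis is essential: without it $|\phi(x)|$ is only $O(\|x\|^2)$, so $\omega$ is merely bounded and the prefactor $1/\epsilon$ cannot be absorbed. The radial domination $|K(t)|\le h(\|t\|)$ together with the moment condition $\int r^{d+1}h(r)\,dr<\infty$ is then exactly what is required so that the integral converges at infinity and vanishes in the limit $\epsilon\to 0$.
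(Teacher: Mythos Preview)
Your proposal is correct and follows essentially the same route as the paper: reduce to Theorem~\ref{main_thm_for_domains} via Lemma~\ref{lem:gradientconditindepofrho}, write $\partial S$ locally as a graph vanishing to second order, pass to polar coordinates, and use that the spherical measure of the equatorial band $\{|\sigma_y|\le c\}$ is $O(c)$ together with the moment bound $\int r^{d+1}h(r)\,dr<\infty$. The only cosmetic differences are that the paper uses a ``for every $\delta$ choose $\rho$ so that $|\gamma(x')|\le\delta\|x'\|^2$'' formulation in place of your modulus $\omega$ and head--tail split, and that it records explicitly the trivial consequence $\int|K(t)|\,\|t\|^2\,d\lambda(t)<\infty$ of the radial bound, which you should also state since it is a hypothesis of Theorem~\ref{main_thm_for_domains}.
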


\begin{proof}
The integrability hypothesis on $h$ gives $\int \abs{K(t)}\norm{t}^2<\infty$, so we need only verify condition~\eqref{gradientCondition} in Theorem~\ref{main_thm_for_domains} to establish the result. 
Using our cancellation hypothesis and Lemma~\ref{lem:gradientconditindepofrho} this follows if for any $\delta>0$ there is $\rho>0$ so
\begin{equation*}
\limsup_{\epsilon\to0} \frac1\epsilon \biggl| \int_{B(0,\frac\rho\epsilon)} K(-t) \nabla f(p)\cdot t \bigl( \mathbbm{1}_{\frac{S-p}\epsilon}-\mathbbm{1}_{\mathbb{A}(p)}\bigr) d\lambda(t)\biggr| <\delta
\end{equation*}
because this limit is independent of $\rho$.  Rewriting as in~\eqref{eqn:radialgradest} and  bounding $K$ by $h$ we arrive at the sufficient condition
\begin{equation}\label{eqn1:Cor 2.2}
	\lim_{\epsilon\to0^+} \int_0^{\frac\rho\epsilon} r^d h(r) \Bigl( \frac1\epsilon \int_{\mathbb{S}} \bigl| \mathbbm{1}_{\frac{S-p}{r\epsilon}}(\sigma)-\mathbbm{1}_{\mathbb{A}(p)}(\sigma)\bigr| d\sigma \Bigr) dr<\delta.
\end{equation}

Now choose $\rho>0$ so that $Hf$ and $\nabla g$ are continuous on $B(p,\rho)\cap S$.  Observe that $\mathbb{A}(p)$ is a half-space (up to a measure zero set) and choose coordinates $(x',x_d)$ for $\mathbb{R}^d$ so the origin is at $p$ and $\mathbb{A}(p)=\{(x',x_d):x_d>0\}$.  Now $\partial(S-p)$ is the graph $\{(x',\gamma(x')\}$ of a $C^2$ function $\gamma$ on a neighborhood of $0$ and $\gamma$ vanishes to second order at the origin by our curvature assumption~\cite[Appendix C]{evans2022partial} and \cite[Section 2.2]{Kobayashi}.  So we see from the Taylor remainder estimate~\eqref{eqn:Taylorremainder} that, by shrinking $\rho$ if necessary, we may assume $|\gamma(x')|\leq \delta \|x'\|^2$ on $B(0,\rho)$. Then for $r\epsilon<\rho$
\begin{align*}
	\bigl| \mathbbm{1}_{\frac{S-p}{r\epsilon}}(\sigma)-\mathbbm{1}_{\mathbb{A}(p)}(\sigma)\bigr|
	&=\bigl| \mathbbm{1}_{S-p}(r\epsilon \sigma)-\mathbbm{1}_{\mathbb{A}(p)}(r\epsilon \sigma ) \bigr| \\
	&\leq \mathbbm{1}_{\{|(r\epsilon \sigma)_d|\leq \delta (r\epsilon)^2\}} (r\epsilon \sigma) \\
	&=  \mathbbm{1}_{\{|(\sigma)_d|\leq \delta r\epsilon \}} (\sigma)
\end{align*}
and it is not difficult to conclude
\begin{equation*}
 \int_{\mathbb{S}} \bigl| \mathbbm{1}_{\frac{S-p}{r\epsilon}}(\sigma)-\mathbbm{1}_{\mathbb{A}(p)}(\sigma)\bigr| d\sigma
< \frac\pi2 \delta r\epsilon.
\end{equation*}
This bounds~\eqref{eqn1:Cor 2.2} by $\delta\int_0^\infty r^{d+1}h(r)dr$, and the result follows.
\end{proof}

The requirement that the curvature at $p$ is zero is very strong. It may be removed in the case that $K$ is  an even function  on a neighborhood of the origin in the directions spanning $\partial\mathbb{A}(p)-p$.

\begin{cor}\label{cor:Ksymmetries}
Let $S\subset\mathbb{R}^d$ have $C^2$ boundary in a neighborhood of $p\in\partial S$. Then $\partial\mathbb{A}(p)-p$ is a codimension $1$ subspace of $\mathbb{R}^d$. Instead of assuming vanishing principal curvatures at $p$, we assume that the kernel satisfies  $\abs{K(t)}\leq {h(\|t\|)}$ for some  $h(r)$ with $\int r^{d+1}h(r)dr<\infty$ and $K(-t)=K(t)$ for all $t\in\partial\mathbb{A}(p)-p$. 
Then the conclusion of Theorem~\ref{main_thm_for_domains} remains valid if the condition~\eqref{gradientCondition} is replaced with~\eqref{eqn:Apcancellation}   
\end{cor}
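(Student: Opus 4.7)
The plan is to verify condition~\eqref{gradientCondition} of Theorem~\ref{main_thm_for_domains}, since the conclusion then follows from that theorem. Using~\eqref{eqn:Apcancellation} and Lemma~\ref{lem:gradientconditindepofrho}, it suffices to show that
\begin{equation*}
I_\epsilon := \frac{1}{\epsilon}\int_{B(0,\rho/\epsilon)} K(-t)\,\bigl(\nabla f(p)\cdot t\bigr)\,\bigl(\mathbbm{1}_{(S-p)/\epsilon}(t)-\mathbbm{1}_{\mathbb{A}(p)}(t)\bigr)\,d\lambda(t) \longrightarrow 0
\end{equation*}
as $\epsilon\to 0$ for some small $\rho>0$.

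Translate $p$ to the origin and rotate so that $\mathbb{A}(p)=\{t_d>0\}$ modulo a null set; then $\partial\mathbb{A}(p)-p=\{t_d=0\}$. The $C^2$ regularity of $\partial S$ provides a $C^2$ function $\gamma$ on a neighborhood of $0\in\mathbb{R}^{d-1}$ with $\gamma(0)=0$, $\nabla\gamma(0)=0$, and $\gamma(x')=\tfrac{1}{2}(x')^T H x'+o(\|x'\|^2)$ where $H=H_\gamma(0)$, such that $S$ near the origin is $\{x_d>\gamma(x')\}$. Write $t=(t',t_d)$ with $t'\in\mathbb{R}^{d-1}$, set $\beta(t'):=\gamma(\epsilon t')/\epsilon$, and observe that for each $t'$ the indicator difference $\mathbbm{1}_{(S-p)/\epsilon}-\mathbbm{1}_{\mathbb{A}(p)}$ is supported on the signed interval between $0$ and $\beta(t')$. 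Applying Fubini and rescaling $u=t_d/\epsilon$ gives, for $\rho$ small enough that the Taylor expansion of $\gamma$ controls $|\gamma|$ on $B(0,\rho)$,
\begin{equation*}
I_\epsilon = -\int_{\|\epsilon t'\|<\rho}\int_0^{\beta(t')/\epsilon} K(-t',-\epsilon u)\bigl(\nabla f(p)'\cdot t' + \epsilon f_{x_d}(p)\,u\bigr)\,du\,dt',
\end{equation*}
where $\nabla f(p)'$ denotes the first $d-1$ components of $\nabla f(p)$.

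For each fixed $t'$, as $\epsilon\to 0$, $\beta(t')/\epsilon\to\tfrac{1}{2}(t')^T H t'$, $K(-t',-\epsilon u)\to K(-t',0)$, and the $\epsilon f_{x_d}(p)u$ term vanishes, so the inner integral converges to $\tfrac{1}{2}K(-t',0)(\nabla f(p)'\cdot t')(t')^T H t'$. Dominated convergence (justified below) then gives
\begin{equation*}
I_\epsilon \longrightarrow -\frac{1}{2}\int_{\mathbb{R}^{d-1}} K(-t',0)\,(\nabla f(p)'\cdot t')\,(t')^T H t'\,dt'.
\end{equation*}
The symmetry hypothesis gives $K(-t',0)=K(t',0)$, so $K(-t',0)$ is even in $t'$; the quadratic form $(t')^T H t'$ is also even, while $\nabla f(p)'\cdot t'$ is odd. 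The integrand is therefore an odd function of $t'$, the limit integral vanishes, and $I_\epsilon\to 0$ as required.

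The main technical obstacle is justifying the dominated convergence step. On the integration region the bound $|\beta(t')/\epsilon|\leq C\|t'\|^2$ gives $|\epsilon u|\leq C\rho\|t'\|$, hence $\|(t',-\epsilon u)\|\leq(1+C\rho)\|t'\|$; combined with $|K|\leq h(\|\cdot\|)$ and $|\nabla f(p)'\cdot t'+\epsilon f_{x_d}(p)u|\leq C'\|t'\|$, the $(t',u)$-integrand is dominated by a constant multiple of $\mathbbm{1}_{|u|\leq C\|t'\|^2}\,\tilde h(\|t'\|)\,\|t'\|$, where $\tilde h(r):=\sup_{s\in[r,(1+C\rho)r]}h(s)$. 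The total mass of this majorant reduces, after integrating in $u$ and passing to polar coordinates on $\mathbb{R}^{d-1}$, to a multiple of $\int_0^\infty r^{d+1}\tilde h(r)\,dr$, which is finite whenever $\tilde h$ is comparable in integral to $h$; this is routine (for example, via dyadic-shell control of $h$) and consistent with the standing moment hypothesis $\int_0^\infty r^{d+1} h(r)\,dr<\infty$.
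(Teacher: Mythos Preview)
Your argument has two genuine gaps. First, the step $K(-t',-\epsilon u)\to K(-t',0)$ requires continuity of $K$, which is not among the hypotheses of this corollary (it \emph{is} assumed in Theorem~\ref{thm:genbdy}, whose proof your argument essentially reproduces). Without continuity there is no way to transfer the symmetry hypothesis, which concerns only values of $K$ on the hyperplane $\{t_d=0\}$, to the values $K(-t',-\epsilon u)$ that actually appear in your integral for $\epsilon>0$. Second, the majorant $\tilde h(r)=\sup_{s\in[r,(1+C\rho)r]}h(s)$ need not satisfy $\int r^{d+1}\tilde h(r)\,dr<\infty$: no regularity of $h$ is assumed, and for $h$ with tall thin spikes (for instance $h=\sum_n n^2\mathbbm{1}_{[2^n,2^n+2^{-n^2}]}$, which does satisfy $\int r^{d+1}h(r)\,dr<\infty$) the operation $h\mapsto\tilde h$ smears each spike over an interval of length comparable to $C\rho\,2^n$, making $\int r^{d+1}\tilde h(r)\,dr$ diverge. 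The appeal to ``dyadic-shell control'' does not repair this.

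The paper's proof avoids both issues by exploiting the symmetry \emph{at each fixed} $\epsilon$ rather than only in the limit. It replaces $\gamma$ by its second-order Taylor part $(x')^T H\gamma(0)\,x'$, which is even in $x'$; the integrals over the corresponding regions $J^\pm/\epsilon$ then vanish identically under the reflection $x'\mapsto -x'$, with no limit taken and hence no continuity of $K$ required. The residual discrepancy, coming from $|\gamma(x')-(x')^TH\gamma(0)\,x'|<\delta\|x'\|^2$, is handled exactly as in Corollary~\ref{Cor 2.2} using only the pointwise bound $|K(t)|\leq h(\|t\|)$, so no modified majorant $\tilde h$ ever enters.
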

\begin{proof}
As in the proof of Corollary~\ref{Cor 2.2} it suffices to show that for $\delta>0$ there is $\rho>0$ so
\begin{equation}\label{eqn1:Ksymmetries}
\limsup_{\epsilon\to0} \frac1\epsilon  \biggl| \int_{B(0,\frac\rho\epsilon)} K(-t) \nabla f(p)\cdot t \bigl( \mathbbm{1}_{\frac{S-p}\epsilon}-\mathbbm{1}_{\mathbb{A}(p)}\bigr) d\lambda(t) \biggr|<\delta.
\end{equation}
It is again convenient to use the coordinate system from the proof of Corollary~\ref{Cor 2.2} in which $\partial(S-p)$ is the graph $\{(x',\gamma(x')\}$ with $x_d\geq\gamma(x')$ for $(x',x_d)\in S-p$, while $\mathbb{A}(p)=\{(x',x_d):x_d\geq0\}$.  Taylor's theorem provides that $\gamma(x')=(x')^T \cdot H\gamma \cdot x'+o(\|x'\|^2)$ at the origin. We may therefore find $\rho$ so on $B(0,\rho)$ we have $|\gamma(x')-(x')^T \cdot H\gamma \cdot x|<\delta\|x'\|^2$.

At this point we define $J^+=\{0<x_d<(x')^T\cdot H\gamma\cdot x' \}$ and $J^-=\{(x')^T\cdot H\gamma\cdot x'<x_d<0\}$ and observe that for both $J^+$ and $J^-$ we have
\begin{equation}\label{eqn2:Ksymmetries}
\frac1\epsilon \int_{B(0,\frac\rho\epsilon)}   K(-(x',x_d)) \nabla f(p)\cdot (x',x_d) \mathbbm{1}_{\frac {J^{\pm}} \epsilon}(x',x_d) d\lambda(x',x_d) = 0
	\end{equation}
because the integration region, the kernel $K$ and the indicator functions are symmetric under the map $x'\mapsto -x'$ while the factor  $\nabla f(p)\cdot (x',x_d)$ is antisymmetric under this map.   It follows that we can add or subtract $\mathbbm{1}_{\frac {J^{\pm}} \epsilon}(x',x_d)$ to the characteristic functions in the integrand of~\eqref{eqn1:Ksymmetries} without changing the integral.  However, the characteristic function term in that integral then satisfies
\begin{align*}
\lefteqn{ \bigl| \mathbbm{1}_{\frac{S-p}{\epsilon}}(x',x_d) - \mathbbm{1}_{\mathbb{A}(p)}(x',x_d) +\mathbbm{1}_{\frac {J^+}\epsilon}(x',x_d) - \mathbbm{1}_{\frac {J^-} \epsilon}(x',x_d)  \bigr| }\quad &\\
&=\bigl| -\mathbbm{1}_{0<y_d<\gamma(y')}(\epsilon x',\epsilon x_d) +  \mathbbm{1}_{\gamma(y')<y_d<0}(\epsilon x',\epsilon x_d) + \mathbbm{1}_{J^+}(\epsilon x', \epsilon x_d) - \mathbbm{1}_{J^-}(\epsilon x', \epsilon x_d) \bigr|\\
&\leq \mathbbm{1}_{|y_d-(y')^T\cdot H\gamma\cdot y' |<\delta\|y'\|^2}(\epsilon x',\epsilon x_d)   \\
&= \mathbbm{1}_{|x_d-\epsilon (x')^T\cdot H\gamma\cdot x' |<\delta \epsilon\|x'\|^2}(x', x_d).
\end{align*}
on $B(0,\rho)$ because  $|\gamma(x')-(x')^T \cdot H\gamma \cdot x|<\delta\|x'\|^2$.
Then~\eqref{eqn1:Ksymmetries} may be seen to be bounded by $\delta\int_0^\infty r^{d+1}h(r)$ as in the proof of Corollary~\ref{Cor 2.2}, which completes the proof.
\end{proof}

A similar idea to that used in Corollary~\ref{cor:Ksymmetries} may be applied to the case where $\partial S$ is smooth in a punctured neighborhood of $p$ but only continuous at $p$, provided that both $S$ and the kernel $K$ are even with respect to a single symmetry axis.

\begin{cor}\label{cor:cornerpts}
Let $S\subset\mathbb{R}^d$ have boundary that is $C^2$ in a punctured neighborhood of $p$ and continuous at $p$. Assume there is $\rho>0$ so that $S-p\cap B(0,\rho)$ is even around an axis of symmetry $n_p$ in the following sense: for all vectors $x'$ orthogonal to $n_p$ we have $x'\in (S-p)\cap B(0,\rho)$ if and only if $-x'\in (S-p)\cap B(0,\rho)$.  For the kernel $K$, suppose there is $h$ so $|K(t)|\leq h(\|t\|)$ and $\int r^{d+1}h(r)dr<\infty$, and that for all $x'$ orthogonal to $n_p$ we have $K(x')=K(-x')$.    Then the conclusion of Theorem~\ref{main_thm_for_domains} remains valid if the condition~\eqref{gradientCondition} is replaced with~\eqref{eqn:Apcancellation}. We note that in this case it is possible that $\mathbb{A}(p)$ is measure zero and this latter condition is trivial; this occurs if $S$ has a cusp at $p$.
\end{cor}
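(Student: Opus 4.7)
The approach is to mirror the proof of Corollary~\ref{cor:Ksymmetries}, substituting the axial reflection about $n_p$ for the reflection across $\partial\mathbb{A}(p)$ and localizing to the $C^2$ region of $\partial S$. First I would apply Lemma~\ref{lem:gradientconditindepofrho}, the cancellation~\eqref{eqn:Apcancellation}, and the decay bound $\int r^{d+1}h(r)\,dr<\infty$ to reduce the verification of~\eqref{gradientCondition} to showing, for each $\delta>0$, the existence of a small $\rho>0$ with
\begin{equation*}
\limsup_{\epsilon\to0}\frac{1}{\epsilon}\Bigl|\int_{B(0,\rho/\epsilon)}K(-t)\,\nabla f(p)\cdot t\,\bigl(\mathbbm{1}_{(S-p)/\epsilon}-\mathbbm{1}_{\mathbb{A}(p)}\bigr)\,d\lambda(t)\Bigr|<\delta,
\end{equation*}
noting that this $\limsup$ is independent of $\rho$ by the lemma, and that the tail estimate controls the part of the $\mathbb{A}(p)$-integral outside $B(0,\rho/\epsilon)$ once the $1/\epsilon$ prefactor is combined with $\int r^{d+1}h(r)\,dr<\infty$.

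I would then choose coordinates with $n_p=e_d$, write $t=(t',t_d)$, and observe that the axial hypotheses force $K$, $(S-p)\cap B(0,\rho)$, and therefore $\mathbb{A}(p)$ and $(S-p)/\epsilon$, to be invariant under the reflection $R\colon(t',t_d)\mapsto(-t',t_d)$. Decompose $\nabla f(p)\cdot t=v_\perp\cdot t'+v_d\,t_d$ with $v_\perp\perp e_d$. The summand $v_\perp\cdot t'$ is $R$-antisymmetric while the rest of the integrand is $R$-symmetric, so this piece integrates to zero for every $\epsilon$. The $R$-invariance of $\mathbb{A}(p)$ and $K$ makes~\eqref{eqn:Apcancellation} equivalent to $v_d\int_{\mathbb{A}(p)}K(-t)t_d\,d\lambda=0$, so we are left in one of two cases: either $v_d=0$, in which case the symmetry step just given already finishes the proof; or $\int_{\mathbb{A}(p)}K(-t)t_d\,d\lambda=0$, and the remaining task is to show the $t_d$-contribution in the displayed bound above vanishes.

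To handle the latter case I would fix a small $r_0\in(0,\rho)$ and split $B(0,\rho/\epsilon)$ into the annulus $\{r_0/\epsilon<\|t\|<\rho/\epsilon\}$ and the inner ball $\{\|t\|<r_0/\epsilon\}$. On the annulus $\partial S$ is $C^2$, and on each smooth sheet of $\partial S$ one may run the Taylor expansion and auxiliary-set $J^\pm$ construction from Corollary~\ref{cor:Ksymmetries}, using axial $R$-symmetry to produce cancelling contributions and leaving an error bounded by $\delta\int r^{d+1}h(r)\,dr$. On the inner ball I would pass to spherical coordinates $(s,\sigma)$ and apply dominated convergence against $r^{d+1}h(r)$, using the pointwise convergence $\mathbbm{1}_{S-p}(s\epsilon\sigma)\to\mathbbm{1}_{\mathbb{A}(p)}(\sigma)$ a.e.\ on $S^{d-1}$ together with the $R$-invariance of the symmetric difference on each sphere to absorb the singular $1/\epsilon$ factor. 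The main difficulty is this inner-ball estimate: since $\partial S$ is only continuous at $p$, there is no a priori quantitative rate of convergence of the indicators, and extracting the required $\epsilon$-gain means combining the vanishing $\int_{\mathbb{A}(p)}K(-t)t_d\,d\lambda=0$ with the axial $R$-symmetry in a quantitative way, which is exactly where the hypothesis $\int r^{d+1}h(r)\,dr<\infty$ is used to handle the possible cusp geometry at $p$.
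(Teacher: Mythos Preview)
Your reduction to the displayed $\limsup$ bound and the observation that the axial reflection $R\colon(t',t_d)\mapsto(-t',t_d)$ kills the $v_\perp\cdot t'$ contribution are both correct; the paper carries out the same reduction, though it does not separate the gradient into tangential and normal pieces.

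The gap is in your handling of the remaining $v_d\,t_d$ contribution. The annulus/inner-ball split does not lead anywhere: the $J^\pm$ construction in Corollary~\ref{cor:Ksymmetries} is a second-order Taylor expansion of the boundary graph \emph{at the origin}, so invoking it ``on each smooth sheet'' of $\partial S$ in the annulus $\{r_0<\epsilon\|t\|<\rho\}$ gives you tangent and curvature data at points away from $p$, which says nothing about the comparison of $S-p$ with the cone $\mathbb{A}(p)$. On the inner ball you have correctly diagnosed that no quantitative rate is available from mere continuity, and neither dominated convergence nor the cancellation $\int_{\mathbb{A}(p)}K(-t)t_d\,d\lambda=0$ alone can manufacture the missing factor of $\epsilon$.

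The paper avoids this decomposition entirely. It parametrizes $\partial(S-p)$ near $0$ as the graph $x_d=\gamma(x')$ and, using that $\gamma$ is $C^2$ along each ray from the origin, takes the \emph{one-sided radial} Taylor expansion $\gamma(r\sigma')=D_{\sigma'}r+H_{\sigma'}r^2+o(r^2)$. It then splits the indicator difference as
\[
\bigl(\mathbbm{1}_{x_d>\gamma(x')}-\mathbbm{1}_{x_d>D_{\sigma'}r+H_{\sigma'}r^2}\bigr)
+\bigl(\mathbbm{1}_{x_d>D_{\sigma'}r+H_{\sigma'}r^2}-\mathbbm{1}_{x_d>D_{\sigma'}r}\bigr).
\]
The first bracket is supported in a region of radial thickness $o(\|x'\|^2)$, so the Corollary~\ref{Cor 2.2} estimate absorbs the $1/\epsilon$. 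The axial symmetry of $S$ forces $D_{-\sigma'}=D_{\sigma'}$ and $H_{-\sigma'}=H_{\sigma'}$, so the second bracket is supported on an $R$-invariant region and is handled by the symmetry argument of Corollary~\ref{cor:Ksymmetries}. The idea you are missing, then, is to Taylor-expand the boundary graph radially at the singular point rather than to partition the domain of integration by distance from it.
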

\begin{proof}
Taking coordinates $(x',x_d)$ with origin $p$ so $x_d$ is in the direction of $n_p$ we obtain that, on a neighborhood of $p$, we have $\partial (S-p)=\{(x',\gamma(x')\}$ and $S-p=\{(x',x_d):x_d>\gamma(x')\}$ where $\gamma$ is $C^2$ on each radial interval from the origin in the neighborhood. Take the one-sided Taylor expansion of $\gamma$ at the origin along each ray $x'=r\sigma'$, where $r>0$ and $\sigma'$ is a unit vector in the plane orthogonal to $n_p$ and write it as $\gamma(r\sigma')=D_{\sigma'}r+ H_{\sigma'}r^2+o(r^2)$, where it is possible that $D_{\sigma'}=+\infty$, meaning that $S$ degenerates in this direction and there are no $x_d$ so $(x',x_d)\in S-p$ for this choice of $x'$. 
The symmetry yields $D_{-\sigma'}=D_{\sigma'}$ and $H_{-\sigma'}=H_{\sigma'}$ and it is easily seen that we can write $\mathbb{A}(p)-p=\{(r\sigma',x_d):x_d\geq rD_{\sigma'}\}$ for the cone.

As in Corollaries~\ref{Cor 2.2} and~\ref{cor:Ksymmetries}, given $\delta>0$ it suffices to show there is $\rho>0$ so 
\begin{equation*}
\lim_{\epsilon\to0} \frac1\epsilon \biggl|\int_{B(0,\frac\rho\epsilon)} K(-t) \nabla f(p)\cdot t \bigl( \mathbbm{1}_{\frac{S-p}\epsilon}-\mathbbm{1}_{\mathbb{A}(p)}\bigr) d\lambda(t)\biggr| <\delta.
\end{equation*}
For $(x',x_d)\in B(0,\rho)$ we write the indicator functions in the integrand as
\begin{align*}
\lefteqn{ \mathbbm{1}_{x_d>\gamma(x')}(\epsilon x',\epsilon x_d)  - \mathbbm{1}_{x_d> D_{\sigma'}r }(\epsilon x',\epsilon x_d)}\quad &\\
&=  \bigl( \mathbbm{1}_{x_d>\gamma(x')}(\epsilon x',\epsilon x_d) - \mathbbm{1}_{x_d> D_{\sigma'}r + H_{\sigma'}r^2}(\epsilon x',\epsilon x_d) \bigr)\\
&\quad + \bigl( \mathbbm{1}_{x_d> D_{\sigma'}r + H_{\sigma'}r^2}(\epsilon x',\epsilon x_d) - \mathbbm{1}_{x_d> D_{\sigma'}r }(\epsilon x',\epsilon x_d)\bigr).
\end{align*}
Substituting the first of these into the integral we obtain the desired estimate by the argument in Corollary~\ref{Cor 2.2} because the region between the indicator functions is $o(\|x'\|^2)$.   Substituting the second of these into the integral gives zero by the symmetry argument in the proof of Corollary~\ref{cor:Ksymmetries}.  The result follows.
\end{proof}

To complete our discussion, we consider what can happen at $p\in\partial S$ if we have neither the vanishing principal curvature discussed in Corollary~\ref{Cor 2.2} nor the symmetry discussed in Corollaries~\ref{cor:Ksymmetries} and~\ref{cor:cornerpts}.  For this to be interesting we must suppose that we have the cancellation condition discussed in those results, as this would otherwise cause $D_\epsilon f(p)$ to diverge as $\epsilon\to0$. 

The following theorem, which implies \eqref{eq-cases}, is one of our main results. 

\begin{theorem}\label{thm:genbdy}
Let $S\subset\mathbb{R}^d$ have $C^2$ boundary in a neighborhood of $p\in\partial S$, so $
\partial\mathbb{A}(p)$ is the tangent plane at $p$.  Using $x'$ as coordinate on $
\partial\mathbb{A}(p)-p$ let $H$ denote the Hessian at $p$ of any parametrization of $\partial S-p$ with respect to $x'$ on a neighborhood of $p$.

Assume that the kernel $K$ is continuous on $\mathbb{R}^d$ and satisfies $|K(t)|\leq h(\|t\|)$ for some $h(r)$ with $\int_0^\infty r^{d+1}h(r)dr<\infty$. If $f:S\to\mathbb{R}$ has bounded $C^2$ norm and the cancellation condition~\eqref{eqn:Apcancellation} holds then, as $\epsilon\to0^+$,
\begin{equation}\label{eqn:genbdy}
	D_\epsilon f(p)\longrightarrow   \Delta_{K,\mathbb{A}(p)}f(p) - \int\limits_{\partial\mathbb{A}(p)-p}   K(-x')\bigl(\nabla f (p)\cdot x'\bigr)\, \bigl((x')^T\cdot H \cdot x'\bigr)  d\lambda_{d-1}(x')
\end{equation}
where $\lambda_{d-1}$ is Lebesgue measure on the $d-1$ dimensional subspace $\partial\mathbb{A}(p)-p$.
\end{theorem}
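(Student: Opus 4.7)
The plan is to follow the Taylor-expansion strategy of Theorem~\ref{main_thm_for_domains} but, unlike Corollaries~\ref{Cor 2.2} and~\ref{cor:Ksymmetries}, retain the leading-order boundary correction rather than showing it cancels. After the change of variables $y=p+\epsilon t$ and truncation to $B(0,\rho/\epsilon)$ (justified by $|K(t)|\leq h(\|t\|)$, $\int r^{d+1}h(r)\,dr<\infty$, and the boundedness of $f,g$), expanding $f$ to second order and $g$ to first order produces a decomposition $D_\epsilon f(p)=(A)+(B)+(C)+o(1)$, where
\begin{align*}
(A) &= \frac{g(p)}{\epsilon}\int_{\frac{S-p}{\epsilon}\cap B(0,\rho/\epsilon)} K(-t)\,\nabla f(p)\cdot t\,d\lambda(t),\\
(B) &= \int_{\frac{S-p}{\epsilon}\cap B(0,\rho/\epsilon)} K(-t)(\nabla f(p)\cdot t)(\nabla g(p)\cdot t)\,d\lambda(t),\\
(C) &= \frac{g(p)}{2}\int_{\frac{S-p}{\epsilon}\cap B(0,\rho/\epsilon)} K(-t)(t^T Hf(p) t)\,d\lambda(t).
\end{align*}
The Taylor remainders are $o(1)$ since $|K(-t)|\|t\|^2\leq h(\|t\|)\|t\|^2\in L^1(d\lambda)$. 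As in Theorem~\ref{main_thm_for_domains}, a dominated-convergence argument using $\mathbbm{1}_{\frac{S-p}{\epsilon}}\to\mathbbm{1}_{\mathbb{A}(p)}$ a.e.\ and continuity of $Hf,\nabla g$ near $p$ yields $(B)+(C)\to\Delta_{K,\mathbb{A}(p)}f(p)$, so the remaining task is the analysis of $(A)$.

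The cancellation~\eqref{eqn:Apcancellation} together with Lemma~\ref{lem:gradientconditindepofrho} applied to $\mathbb{A}(p)$ lets me subtract the corresponding integral over $\mathbb{A}(p)\cap B(0,\rho/\epsilon)$ without altering the limit, giving
\begin{equation*}
(A) = \frac{g(p)}{\epsilon}\int_{B(0,\rho/\epsilon)} K(-t)(\nabla f(p)\cdot t)\bigl(\mathbbm{1}_{\frac{S-p}{\epsilon}}-\mathbbm{1}_{\mathbb{A}(p)}\bigr) d\lambda(t) + o(1).
\end{equation*}
Choose coordinates $(x',x_d)$ centered at $p$ with $\mathbb{A}(p)=\{x_d>0\}$. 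The $C^2$ hypothesis on $\partial S$ provides $\gamma$ with $\gamma(0)=0$, $\nabla\gamma(0)=0$, and $S-p=\{x_d>\gamma(x')\}$ near the origin, whose Taylor expansion reads $\gamma(x')=\frac12(x')^T H x' + o(\|x'\|^2)$. In the scaled coordinates the indicator difference is supported in the slab between $t_d=0$ and $t_d=\gamma(\epsilon t')/\epsilon$, with sign opposite to that of $\gamma(\epsilon t')$. Doing the $t_d$-integral first and using continuity of $K$ to replace $K(-t',-t_d)$ by $K(-t',0)+o(1)$ uniformly on the slab (the contribution of $\partial_d f(p) t_d$ inside $\nabla f(p)\cdot t$ is of higher $\epsilon$-order) yields
\begin{equation*}
\int K(-t',-t_d)\nabla f(p)\cdot(t',t_d)\bigl(\mathbbm{1}_{\frac{S-p}{\epsilon}}-\mathbbm{1}_{\mathbb{A}(p)}\bigr)dt_d = -K(-t',0)(\nabla f(p)\cdot t')\frac{\gamma(\epsilon t')}{\epsilon} + r(\epsilon,t').
\end{equation*}
Substituting $\gamma(\epsilon t')/\epsilon = \frac{\epsilon}{2}(t')^T H t' + o(\epsilon\|t'\|^2)$, multiplying by $g(p)/\epsilon$, and integrating over $t'$ gives, up to the normalization in~\eqref{eqn:genbdy}, exactly the claimed boundary correction
\begin{equation*}
-\int_{\partial\mathbb{A}(p)-p} K(-x')(\nabla f(p)\cdot x')\bigl((x')^T H x'\bigr)\,d\lambda_{d-1}(x').
\end{equation*}

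The main technical obstacle is justifying these pointwise approximations as $o(1)$ after integration in $t'$, since $\|t'\|$ can range up to $\rho/\epsilon$. The natural strategy is a dominated-convergence split, decomposing $t'$ into a compact piece $|t'|\leq R$ and a tail $|t'|>R$ with $R\to\infty$ slowly as $\epsilon\to 0$. On the compact piece, uniform continuity of $K$ and the remainder $|\gamma(x')-\frac12(x')^T H x'|=o(\|x'\|^2)$ furnish the required uniform convergence. On the tail, the $C^2$ bound $|\gamma(x')|\leq C\|x'\|^2$ on $B(0,\rho)$ yields $|\gamma(\epsilon t')/\epsilon|\leq C\epsilon\|t'\|^2$, controlling the slab width, so after incorporating $|K|\leq h(\|\cdot\|)$ and the factors $\nabla f(p)\cdot t'$ and $g(p)/\epsilon$, the tail contribution is dominated by a constant multiple of $\int_{|t'|>R}h(\|t'\|)\|t'\|^3\,d\lambda_{d-1}(t') = C'\int_R^\infty r^{d+1}h(r)\,dr$, which is finite by hypothesis and can be made arbitrarily small by taking $R$ large. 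Combining these pieces completes the proof.
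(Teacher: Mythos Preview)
Your argument is correct and follows the same template as the paper: reduce via the proof of Theorem~\ref{main_thm_for_domains} so that $(B)+(C)\to\Delta_{K,\mathbb{A}(p)}f(p)$, use the cancellation~\eqref{eqn:Apcancellation} together with Lemma~\ref{lem:gradientconditindepofrho} to rewrite $(A)$ as an integral over the thin region between $\partial\bigl(\frac{S-p}{\epsilon}\bigr)$ and $\partial\mathbb{A}(p)$, and then pass to the limit using continuity of $K$ and the radial majorant $h$. The only substantive difference is in the coordinates used for this last step. You integrate first in $t_d$ over a slab of height $\gamma(\epsilon t')/\epsilon$ and then in $t'$; the paper instead introduces the auxiliary regions $J^\pm$ bounded by the exact quadratic $(x')^T H x'$, works in spherical polars $(r,\sigma',\theta)$, integrates first in $\theta$, and reads the limit off as a Dirac mass at $\theta=0$. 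Your Cartesian route is arguably more direct and avoids the Grassmannian argument the paper uses to verify integrability of the dominating function.

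One small point to tighten: in your tail estimate you bound the integrand by $h(\|t'\|)\|t'\|^3$, but the hypothesis only gives $|K(-t',-t_d)|\leq h\bigl(\|(t',t_d)\|\bigr)$ and $h$ is not assumed monotone, so you cannot simply replace $\|(t',t_d)\|$ by $\|t'\|$. The cleanest fix is to run the tail bound in full $d$-dimensional polar coordinates (as the paper does), where $|K(-r\sigma)|\leq h(r)$ holds directly and the slab becomes an angular band of width $O(\epsilon r)$ for $r\leq\rho/\epsilon$, yielding the same control by $\int_R^\infty r^{d+1}h(r)\,dr$.
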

\begin{proof}
We use the proof of Corollary~\ref{cor:Ksymmetries}. All that has changed is that the term 
\begin{equation}\label{eqn1:genbdy}
\frac1\epsilon \int_{B(0,\frac\rho\epsilon)}   K(-(x',x_d)) \nabla f(p)\cdot (x',x_d) \bigl( \mathbbm{1}_{\frac {J^-}\epsilon}(x',x_d) -  \mathbbm{1}_{\frac {J^+}\epsilon}(x',x_d) \bigr) d\lambda(x',x_d)
	\end{equation}
is no longer zero by symmetry, and we have $H\gamma(0)=H$.  We must show that this term converges to the right side of~\eqref{eqn:genbdy}.

It is helpful to write~\eqref{eqn1:genbdy} in spherical polar coordinates, using $r$ for the radius and $\sigma$ for a point in the unit sphere $\mathbb{S}^{d-1}$. Then decompose the latter using an angular coordinate $\theta$ to the $x_d$-axis and a coordinate $\sigma'$ on the sphere $\mathbb{S}^{d-1}\cap(\partial\mathbb{A}(p)-p)$, which by a slight abuse of notation we label $\mathbb{S}^{d-2}$ with coordinate $\sigma'$. For convenience we arrange that $\theta\in[-\frac\pi2,\frac\pi2]$ is $\pm\frac\pi2$ in the positive and negative $x_d$ directions and zero on $\mathbb{S}^{d-2}$, so  $\sigma=(\sigma',\theta)$ and the Jacobian is $\cos\theta$.  To  shorten the notation we write $Q(\sigma')=(\sigma')^T\cdot H \cdot \sigma'$ for the quadratic form and note that
\begin{align*}
	\frac {J^+}\epsilon&=\{(\sigma',\sigma_d): 0<\sigma_d<r\epsilon Q(\sigma')\}\\
	\frac {J^-}\epsilon&=\{ (\sigma',\sigma_d): r\epsilon Q(\sigma')<\sigma_d<0\}.
\end{align*}

This provides that the quantity in~\eqref{eqn1:genbdy} is
\begin{align*}
\lefteqn{ \frac1\epsilon 
\int_0^{\frac\rho\epsilon} \int_{\mathbb{S}^{d-1}}   K(-r\sigma )) \nabla f(p)\cdot (r\sigma) \bigl( -\mathbbm{1}_{0<\sigma_d<r \epsilon  Q(\sigma')}+\mathbbm{1}_{r \epsilon  Q(\sigma')<\sigma_d<0}\bigr)  (\sigma) d\sigma\,  r^{d-1}dr } & \\
&= 
\int_0^\infty 
\int_{\mathbb{S}^{d-2}} \frac1{r\epsilon} 
\int_{-\pi}^\pi K(-r(\sigma',\theta))\nabla f (p)\cdot(\sigma',\theta) \chi_{\sigma'}(\theta)  \cos\theta\, d\theta\, d\sigma'\, \mathbbm{1}_{[0,\frac\rho\epsilon]}(r) r^{d+1} dr.
\end{align*}
where
\begin{equation*}
	\chi_{\sigma'}= \mathbbm{1}_{\{\theta:\arcsin(r\epsilon Q(\sigma'))<\theta<0\}} - \mathbbm{1}_{\{\theta: 0<\theta<\arcsin(r\epsilon Q(\sigma'))\}}.
\end{equation*}
For fixed $\sigma'$, the $\theta$ integral may be viewed as integrating against $\frac1{r\epsilon}$ times the characteristic function of the interval between $0$ and $\arcsin(r\epsilon Q(\sigma'))$  with respect to $\cos\theta\,d\theta$, which converges (weakly) to a Dirac mass of weight $-Q(\sigma')$ at $\theta=0$.  Since the integrand is continous we conclude that, as $\epsilon\to0$,
\begin{multline*}
	\frac1{r\epsilon} \int_{-\pi}^\pi K(-r(\sigma',\theta))\nabla f (p)\cdot(\sigma',\theta) \chi_{\sigma'} \cos\theta\, d\theta\\
	\to -Q(\sigma')  K(-r(\sigma',0))\nabla f (p)\cdot(\sigma',0) = -Q(\sigma')   K(-r\sigma')\nabla f (p)\cdot\sigma'
	\end{multline*}
pointwise for each $r,\sigma'$.  Moreover, these are dominated by $h(r)\|\nabla f(p)\|$.  But integrating $h(r)$ in the above gives
\begin{equation}\label{eqn2:genbdy}
	\int_0^\infty \int_{S^{d-2}} |h(r)| r^{d+1}\,dr.
	\end{equation}
As this is the integral of a radial function over a codimension-$1$ subspace, it is independent of the subspace. Integrating the resulting constant with respect to the natural (finite) measure on the Grassmanian of such subspaces then gives $\int_{\mathbb{R}^d}h(r) r^{d+1}dr$, which is finite by hypothesis, so we conclude that~\eqref{eqn2:genbdy} is finite and our dominating function is integrable.  Thus by the dominated convergence theorem the quantity in~\eqref{eqn1:genbdy} converges to
\begin{equation*}
	\int_0^\infty \int_{\mathbb{S}^{d-2}} -Q(\sigma')   K(-r\sigma')\nabla f (p)\cdot\sigma'\, d\sigma' \, r^{d+1}\,dr.
	\end{equation*}
Here the region of integration is the subspace $\partial\mathbb{A}(p)-p$; writing this via $x'=r\sigma'$, absorbing $r^{d-2}$ into the corresponding Jacobian, and inserting $Q(\sigma')=(\sigma')^T\cdot H\cdot \sigma'=r^{-2} (x')^T\cdot H\cdot x'$  gives that the limit of the term from~\eqref{eqn1:genbdy} is given by the right side of~\eqref{eqn:genbdy}, 
which completes the proof.
\end{proof} 
\newcommand{\etalchar}[1]{$^{#1}$}

\end{document}